\documentclass{amsart}
\pdfoutput=1
\usepackage[utf8]{inputenc}
\usepackage{amsmath}
\usepackage{amssymb}
\usepackage{amsthm}
\usepackage[T1]{fontenc}
\usepackage{lmodern}

\usepackage{mathrsfs}
\usepackage{enumerate}
\usepackage[labelfont=bf]{caption}
\usepackage{accents}
\usepackage{microtype}
\usepackage{mathtools}

\usepackage{varwidth}

\usepackage{hyperref}
\usepackage{amsmath}
\usepackage{amsthm}
\usepackage{amssymb}
\usepackage{xfrac}
\usepackage{mathtools}
\usepackage{tikz-cd}
\usepackage{enumitem}
\usepackage{comment}
\usepackage{scalerel}
\usepackage{stackengine,wasysym}
\usepackage{csquotes}
\usepackage{todonotes}
\usepackage{cleveref}

\newcommand{\RR}{\mathbb{R}}
\newcommand{\ZZ}{\mathbb{Z}}

\newcommand{\NN}{\mathbb{N}}

\newcommand{\dd}{\mathrm{d}}
\newcommand{\typ}[1]{\left[#1\right]}
\newcommand{\Fatten}[1]{\left<#1\right>}
\newcommand{\mirror}{\star}

\newcommand{\MM}{\mathbb{M}}

\newcommand{\SAT}{\mathcal{D}}

\newcommand{\Graphs}{\mathcal{G}}

\newcommand{\Complexes}{\mathcal{C}}

\newcommand{\Surfaces}{\mathcal{S}}
\newcommand{\Hypers}{\mathcal{H}}

\newcommand{\Riemanns}{\mathcal{R}}

\newcommand{\vol}{\mathrm{vol}}
\newcommand{\Prob}[1]{\mathrm{Prob}\left(#1\right)}

\newcommand{\Ends}[1]{\mathcal{E}\left(#1\right)}
\newcommand{\EndsFin}[1]{\mathcal{E}_{<\infty}\left(#1\right)}
\newcommand{\EndsInf}[1]{\mathcal{E}_{\infty}\left(#1\right)}

\theoremstyle{plain}
\newtheorem{theorem}{Theorem}[section]
\newtheorem{lemma}[theorem]{Lemma}
\newtheorem*{lemma*}{Lemma}
\newtheorem{remark}[theorem]{Remark}
\newtheorem{prop}[theorem]{Proposition}
\newtheorem{cor}[theorem]{Corollary}
\newtheorem{example}[theorem]{Example}
\newtheorem*{theorem*}{Theorem}

\numberwithin{equation}{section}
\theoremstyle{definition}
\newtheorem{definition}[theorem]{Definition}
\newtheorem{construction}[theorem]{Construction}
\newtheorem{notation}[theorem]{Notation}

\title{Ends of stationary metric measure spaces}

\author{Arie Levit and Kfir Silman}

\begin{document}

\begin{abstract}
We prove that stationary random metric measure spaces have $0,1,2$ or a Cantor space of ends. This notion includes stationary random graphs, manifolds and discrete subgroups. In the case of surfaces, we classify all possible homeomorphism types, in analogy with the work of Biringer and Raimbault on unimodular Riemannian  manifolds. Our approach relies on a general \enquote{no geometric core} principle and an analysis of finite versus infinite expected return times.
\end{abstract}

\maketitle

\section{Introduction}

A \emph{metric measure space} is essentially a nice metric space equipped with a nice measure (the precise Definition \ref{def:MMS} is below). Natural examples include locally finite graphs, simplicial complexes, hyperbolic surfaces and manifolds, and more generally Riemannian manifolds equipped with the natural metric and measure.
A \emph{random metric measure space} is a probability measure $\nu$ on the space of \emph{pointed} metric measure spaces. 

In this work we will be considering random walks on random metric measure spaces. To make this notion precise, we introduce \emph{Markov kernels}. These are Borel maps $K$ assigning to every pointed metric measure space $x$ in their domain a transition probability measure $K_x$ on it (see Definition \ref{def:Markov kernel}). A typical sample path of the random walk generated by such a kernel stays on \enquote{one and the same} metric space.

From the topological perspective, we will be interested in studying \emph{spaces of ends}. Roughly speaking, this is an invariant $\Ends{M}$ measuring the different ways a topological space $M$  \enquote{goes off to infinity}.  We  distinguish finite-volume ends $\EndsFin{M}$ and infinite-volume ends $\EndsInf{M}$. The class of topological spaces for which the space of ends can be fruitfully studied are called \emph{Freudenthal}; see \S\ref{sub:the space of ends}.

In its most general form, our main result is the following. It crucially relies on the key assumption of \emph{stationarity} (in the sense of Definition \ref{def:stationary and convolution}).

\begin{theorem}
\label{intro:space of ends general}
Let $\nu$ be a random metric measure space which is stationary with respect to a good Markov kernel. Assume that $\nu$-almost every space is a Freudenthal length space. Then $\nu$-almost surely:
\begin{enumerate}
    \item Either $|\EndsInf{M}| \in \{0,1,2\}$ or $\EndsInf{M}$ is a Cantor space.
    \item If $\EndsFin{M} \neq \emptyset$ then $\overline{\EndsFin{M}} = \Ends{M}$.
\end{enumerate}
\end{theorem}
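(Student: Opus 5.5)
Following the abstract, the plan is to combine a \enquote{no geometric core} principle with an analysis of expected return times.

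\textbf{Step 1: no geometric core.} We will show that $\nu$-almost surely there is no nonempty compact subset $C \subset M$ that can be chosen Borel-measurably from the isomorphism class of $M$, i.e.\ independently of the basepoint. Suppose such a Borel choice $x \mapsto C_x$ existed on a set of positive measure, equivariant under change of basepoint. Consider the function $f(x) = d(o, C_x)$. By stationarity, $f$ evaluated along the random walk is a stationary process. Good kernels have bounded or controlled step sizes, so the increments of $f$ are integrable. On the other hand, the walk is transient away from any fixed compact set unless the space itself is compact, so $f$ should drift to infinity. A stationary sequence cannot tend to infinity in probability, and this gives a contradiction. Equivalently, we can use Poincaré recurrence: the walk returns infinitely often to the set where $f \le R$, and we must rule this out using the infinite-volume, non-recurrent structure. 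This step is where the return-time analysis enters. If $M$ is compact, then it has $0$ ends, which is allowed.

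\textbf{Step 2: the structure of $\EndsInf{M}$.} The space of ends of a Freudenthal space is compact and totally disconnected, and $\EndsInf{M}$ is closed in it. Suppose $|\EndsInf{M}| \ge 3$ but $\EndsInf{M}$ is not a Cantor space. Then it has an isolated point, since a compact, metrizable, totally disconnected set with no isolated points is Cantor. The aim is to produce a canonical compact set. Take all infinite-volume ends $e$ that are isolated in $\EndsInf{M}$. For each such $e$, choose a minimal separating compact set, using the length structure. If there are finitely many isolated ends, the union of \enquote{cores} separating them gives an invariant compact set, which contradicts Step 1. If there are infinitely many isolated ends, together with at least three ends, we get infinitely many candidate cores. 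In that case I would use an argument in the style of mass transport or stationarity: the walk must visit cores infinitely often, and the expected return time to the union of cores is finite for infinite-volume ends. The fact that the walk cannot keep hitting cores while escaping then gives the contradiction. The genuinely delicate case is this one, with infinitely many isolated ends accumulating. The standard trick from unimodular settings, namely choosing a core at bounded distance measurably and using Borel selection, has to be replaced here by stationarity plus return times. I expect this to be the main obstacle.

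\textbf{Step 3: finite-volume ends are dense.} Suppose $\EndsFin{M} \ne \emptyset$ but some end $e$ has a neighbourhood $U$ containing no finite-volume ends. I would compare the two types of region. The walk has a finite expected return time in finite-volume parts, by Kac's lemma for the normalized restricted measure, and an infinite one otherwise. Stationarity forces the walk to spend a positive proportion of time near finite-volume ends, and also in the region $U$, unless $U$ is \enquote{visited with probability zero}. From this we can build a canonical compact set: the boundary between the closure of $\EndsFin{M}$ and its complement, which is compact because $\Ends{M}$ is compact. This contradicts Step 1.
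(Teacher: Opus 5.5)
The overall strategy, a no-core principle powered by return times, is the right one. But the principle you formulate in Step 1 is too weak for the theorem, and the missing strengthening is the heart of the argument.

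\textbf{What Step 1 actually gives.} You state a \emph{no bounded core} principle: no nonempty compact set can be selected measurably and independently of the basepoint. Even this needs repair.
\begin{itemize}
\item An infinite-volume space need not be transient (think of $\ZZ^2$), so $f$ need not drift to infinity. The real tension is Kac's finite expected return time for a stationary \emph{probability} measure (Corollary \ref{cor:0 sets}) against infinite integrated return time for an \emph{infinite} stationary measure (Theorem \ref{thm:MT infinity}).
\item The correct hypothesis is infinite volume, not non-compactness. A finite-area cusped hyperbolic surface is stationary and has a canonical compact thick part.
\end{itemize}

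\textbf{Why bounded cores are not enough.} Granting the bounded-core principle, you can rule out $|\EndsInf{M}|\ge 3$ with a finite, nonzero number of isolated infinite-volume ends. You cannot rule out infinitely many. The comb ($\RR$ with a ray attached at each integer) has no canonical nonempty compact subset at all, by translation symmetry. This is exactly the case you leave open, and mass transport is not available for stationary measures.

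Part (2) fails for the same reason. The complement of $\overline{\EndsFin{M}}$ in $\Ends{M}$ is open but need not be closed, and then no compact subset of $M$ separates it from $\overline{\EndsFin{M}}$. Even when a separating compact set exists, it is not canonical.

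\textbf{The missing idea: unbounded cores.} Allow Borel sets $A\subset\MM$ such that each $(M,p)\in A$ admits a Borel $U\subset M$ with compact boundary, $\vol_M(U)=\infty$ and $\typ{U}\cap A=\emptyset$. For the comb, $A$ is a neighbourhood of the handle and $U$ is a tooth. In general one uses $A^{\mathrm{isolated}}_r$ (the ball $B_M(p,r)$ cuts off at least three infinite-volume components, one containing a single infinite-volume end) and $A^{\mathrm{finite}}_{r,v}$.

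To show such an $A$ is $\nu$-null (Theorem \ref{thm:main markov}), the paper argues as follows.
\begin{itemize}
\item Split $M=X\sqcup Q\sqcup V$ with $Q$ bounded, $V\subset U$ disjoint from the $r$-neighbourhood $A_r$ of $A$, and $d_M(V,X)>R$. Collapse $X$ to a single reflecting point.
\item By intrinsic reversibility the mirrored kernel leaves the mirrored volume stationary. This measure is infinite since $\vol_M(V)=\infty$, so Theorem \ref{thm:MT infinity} gives $\int_Q\mathbb{E}_{q,\widehat K}(\tau_Q)\,\dd\vol_{\widehat M}(q)=\infty$.
\item Excursions into $V$ coincide for the mirrored and original walks until $A_r$ is re-entered. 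Hence $\mathbb{E}_{q,\widehat K}(\tau_Q)\le\mathbb{E}_{\typ{q},K}(\tau_{A_r})+1$.
\item Centralization then yields a positive-volume set of points of $A_r$ from which the expected return time to $A_r$, after one step, is infinite.
\item Such sets are $\nu$-null by Kac, stationarity and Proposition \ref{prop:O retains 0}.
\end{itemize}
Without this geometric-core principle, your Steps 2 and 3 do not close.
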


The notion of a \emph{good} Markov kernel is given in Definition \ref{def:good}. This is a rather mild technical assumption; in the special case of a random walk on a group (or on its Schreier graph), it roughly corresponds to the driving probability measure being symmetric, compactly supported and generating.

The above formulation of Theorem \ref{intro:space of ends general} is perhaps overly abstract. Let us give some concrete examples where it applies.

\begin{cor}
\label{intro:space of ends many examples}
The classification of the space of ends (Theorem \ref{intro:space of ends general}) applies in each of the following particular cases:
\begin{enumerate}
\item $\nu$ is a random Riemannian manifold, stationary with respect to flowing a base point $p$ along a geodesic arc $\gamma_p(v,t)$ in a uniformly random unit tangent direction $v$ and for a uniformly random amount of time $t \in \left[0,1\right]$.
\item Let $G$ be a connected unimodular Lie group. Let $\mu$ be a compactly supported, symmetric, generating and spread-out probability measure on $G$. Let $\nu$ be a random quotient $\Gamma \backslash G$ by a \emph{discrete $\mu$-stationary random subgroup} $\Gamma$; see e.g. \cite{gekhtman2023stationary}. 

\item Let $\Gamma$ be a discrete group and $\mu$ a symmetric, finitely supported and generating measure on  $\Gamma$. 
Let $\nu$ be a random Schreier graph  $\Delta \backslash \mathrm{Cayley}(\Gamma)$ with respect to a \emph{$\mu$-stationary random subgroup $\Delta$}.

\item $\nu$ is a random graph,  stationary with respect to the nearest neighbor random walk (i.e a \emph{stationary random graph}, see e.g. \cite{benjamini2012ergodic}).
\end{enumerate}
\end{cor}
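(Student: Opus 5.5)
The corollary is a verification statement. For each of the four settings I plan to exhibit the random metric measure space $\nu$ and a Markov kernel $K$. Then I check three things: that $\nu$ is stationary for $K$ in the sense of Definition \ref{def:stationary and convolution}, that $K$ is good in the sense of Definition \ref{def:good}, and that $\nu$-almost every space is a Freudenthal length space. Theorem \ref{intro:space of ends general} then applies verbatim. Goodness should reduce in every case to three properties: symmetry (reversibility of $K$ with respect to the measure of the space), uniformly bounded jumps (compact support), and a form of irreducibility or spreading (generation). The Freudenthal length-space condition is automatic for the spaces at hand. Connected Riemannian manifolds, connected locally finite graphs with the path metric, and quotients $\Gamma\backslash G$ of a connected Lie group by a discrete subgroup are all locally compact, connected, locally connected, second countable length spaces, hence Freudenthal by \S\ref{sub:the space of ends}.

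\textbf{Case (1).} $K_{(M,p)}$ is the law of $\gamma_p(v,t)$ with $v$ uniform in $T^1_pM$ and $t$ uniform in $[0,1]$. Stationarity is the hypothesis. For goodness:
\begin{itemize}
\item Bounded jumps hold since $d(p,\gamma_p(v,t))\le 1$.
\item Symmetry follows from Liouville's theorem: the geodesic flow preserves the Liouville measure and the flip $v\mapsto -v$ reverses it, so $K$ is reversible with respect to Riemannian volume.
\item Irreducibility holds because any two points of the connected manifold are joined by a chain of short geodesic segments.
\item Absolute continuity, if Definition \ref{def:good} requires it, holds because two steps already give a measure with a density near $p$.
\end{itemize}

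\textbf{Case (2).} Take $M=\Gamma\backslash G$ with the metric induced by a left-invariant, right-$K$-suitable Riemannian metric on $G$ and the measure induced by Haar measure, which descends because $G$ is unimodular. Take $K_{\Gamma g}=\delta_{\Gamma g}*\mu$, meaning right multiplication by a $\mu$-random element. Then $\mu$-stationarity of $\Gamma$ transfers to stationarity of the pointed space $(\Gamma\backslash G,\Gamma)$ under $K$. For goodness, symmetry of $\mu$ together with unimodularity gives reversibility with respect to the projected Haar measure. Compact support gives bounded jumps. Generation and spread-out give irreducibility and a convolution power with density.

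\textbf{Case (3)} is the discrete analogue: the kernel is the $\mu$-walk on the Schreier graph $\Delta\backslash\mathrm{Cayley}(\Gamma)$ with counting measure. Symmetry gives reversibility, finite support gives bounded jumps, and generation gives connectivity.

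\textbf{Case (4)} is the special case of the simple random walk. It is reversible with respect to the degree measure, so I would take that measure on the graph, or counting measure if the degree is bounded.

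The main obstacle I expect is matching these properties precisely with the technical clauses of Definition \ref{def:good}, especially any measurability or continuity requirement on $x\mapsto K_x$ in the pointed Gromov–Hausdorff-type topology. In case (1) I would establish this via continuity of the exponential map under smooth convergence of pointed manifolds. I would also need to handle the reversibility measure in case (4) when degrees are unbounded, replacing counting measure by the degree measure.
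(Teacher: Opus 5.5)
Your overall plan matches the paper's (\S\ref{subsec:examples}): find a good kernel for which $\nu$ is stationary, note that the spaces are Freudenthal length spaces, and apply Theorem~\ref{intro:space of ends general}. However, two of the verifications fail as written.

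\textbf{Centralization.} You treat centralization as absolute continuity and fold it into ``spreading''. Definition~\ref{def:properties of Markov kernels} instead asks for a lower bound $K_x(\typ{E})\ge c_x\vol_M(E)$ for every Borel $E\subseteq B(p,\varepsilon)$. The nearest-neighbour kernel of case (4) never satisfies this. For a vertex $v$ and any $\varepsilon>0$, the set $E=\{v\}\subseteq B(v,\varepsilon)$ has $\vol(\{v\})=\deg v>0$, while $K_{(G,v)}(\{v\})=0$. So the kernel you propose is not good, and the one-step $\mu$-walk in case (3) can fail in the same way. The paper uses $\frac{K+K^{*2}}{2}$ instead. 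This kernel charges $\{v\}$, keeps the other good properties, and still has $\nu$ as a stationary measure, since stationarity passes to convolution powers and averages. In case (2) you likewise need a convolution power $\mu^{*m}$ that dominates $c\,m_G$ near the identity; symmetry plus spread-out provides one, but a density alone is not enough. In case (1) the one-step kernel is already fine, because its density near $p$ is bounded below. Incidentally, counting measure is not stationary for simple random walk on a non-regular graph, bounded degree or not, so only the degree measure works there.

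\textbf{Labels in case (3).} This gap is more serious: the $\mu$-walk is not a Markov kernel in the sense of Definition~\ref{def:Markov kernel}. A kernel must be a function of the point $x\in\MM$. The unlabelled Schreier graph forgets edge labels and orientations, but the $\mu$-walk depends on them. Take $\Gamma=F_2=\langle a,b\rangle$ with $\mu(a^{\pm1})=0.4$ and $\mu(b^{\pm1})=0.1$, and let $\phi$ be the automorphism swapping $a$ and $b$. The pointed Schreier graphs of $\Delta$ and $\phi(\Delta)$ are always the same point of $\MM$. Yet for a suitably asymmetric $\Delta$, the two walks induce different laws in $\Prob{\MM}$. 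So $x\mapsto K_x$ is ill-defined, which is more basic than the measurability or continuity worry you anticipated.

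This is why the paper introduces the metric Schreier graph $G^l_{\Gamma,\Delta;\Sigma}$. Each edge labelled $\sigma\in\Sigma_1$ carries a stub at distance $l(\sigma)$ from its initial vertex. Each edge labelled $\sigma\in\Sigma_2$ has length $l(\sigma)$. The numbers $l(\sigma)\in(0,\frac12)$ are pairwise distinct. The geometry then reconstructs the labelled, oriented Schreier graph, and the compact stubs do not change the space of ends. On this saturated class $K_\mu$ is well defined, with non-vertex points jumping to a random endpoint, and $\frac{K_\mu+K_\mu^{*2}}{2}$ is good.
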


We remark that the Markov kernels in Corollary \ref{intro:space of ends many examples} are good and the spaces are certainly Freudenthal (so that Theorem \ref{intro:space of ends general} is applicable).

\subsection{Stationary random surfaces}
Our work was inspired by the paper of 
Biringer and Raimbault \cite{biringer2017ends}. As was observed in that paper, it turns out that in the two-dimensional case of surfaces much more precise information is available. This depends on the fact that the homeomorphism type of a surface is determined by its genus, number of cusps, its space of ends, and upon knowing the subsets of ends where genus and cusps accumulate.

\begin{theorem}
\label{intro:space of ends surfaces}
Let $\nu$ be a random oriented Riemannian or hyperbolic surface stationary with respect to the random walk described in Corollary \ref{intro:space of ends many examples}.(1) or \ref{intro:space of ends many examples}.(2) respectively. There are $12$ possible homeomorphism types of $\nu$-generic infinite-volume surfaces, as determined by the following table:
\[
\begin{tabular}{c c c c}
Nickname & Genus & Infinite volume ends  & Infinite genus ends\\
\hline
Finite volume & g & 0 & 0 \\
Plane & $0$ & $1$ & $0$ \\
Loch ness monster & $\infty$ & $1$ & $1$\\
The cylinder & $0$ & $2$ & $0$\\
Jacobs ladder & $\infty$ & $2$ & $2$\\
Cantor's tree & $0$ & $\infty$ & $0$\\
Cantor's tree in bloom & $\infty$ & $\infty$ & $\infty$
\end{tabular}
\]
The surfaces may have cusps (i.e. finite-volume ends), in which case the cusps will accumulate to all ends. 
Lastly, in the hyperbolic case the cylinder with no cusps  is excluded, resulting in $11$ possible infinite-volume types.
\end{theorem}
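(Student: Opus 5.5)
The plan is to reduce to Theorem \ref{intro:space of ends general} and the classification of noncompact orientable surfaces (Kerékjártó--Richards). A connected orientable surface $S$ is determined up to homeomorphism by its genus $g\in\{0,1,\dots,\infty\}$ and the nested pair of closed subsets $\Ends{S}^{g}\subseteq\Ends{S}$, where $\Ends{S}^{g}$ is the set of ends accumulated by genus. Cusps are punctures, i.e.\ isolated planar ends, and they all lie in $\EndsFin{S}$. So I would first check that Corollary \ref{intro:space of ends many examples}.(1),(2) yields a good kernel on a Freudenthal length space. Theorem \ref{intro:space of ends general} then gives that $\EndsInf{S}$ has $0,1,2$ points or is a Cantor set. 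It also gives that when cusps exist, they accumulate onto every end, which is the last clause of the statement. The remaining work is to pin down where the genus goes.

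The key additional input is a \enquote{genus} analogue of part (2) of Theorem \ref{intro:space of ends general}. Namely, $\nu$-almost surely either $g<\infty$ and the surface has finite volume, or $g=\infty$ and $\Ends{S}^{g}=\EndsInf{S}$ (or all of $\Ends{S}$). I would prove this with the same no-geometric-core principle the paper uses for ends. Suppose the genus is finite but nonzero on an infinite-volume surface. Then the set of points within distance $R$ of a chosen compact subsurface carrying all the genus is a nonempty, Borel-definable, bounded (hence finite-volume) \enquote{core}. Stationarity together with recurrence/return-time considerations (finite versus infinite expected return time) forces the walk to spend a positive proportion of time there. In infinite volume this is impossible, so the genus is $0$ or $\infty$.

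When $g=\infty$, apply the same argument to the set of infinite-volume ends that have a planar neighbourhood. Such a non-accumulated end is seen from the walk as a region where genus stops. Choosing handles via a measurable rule and running the argument of part (1) of the main theorem on the closed subset $\Ends{S}^{g}\cap\EndsInf{S}$ shows this set is either empty or all of $\EndsInf{S}$. It must be nonempty because handles escape to infinity along infinite-volume ends. Finite-volume ends carrying genus would contradict the density statement, since they would have to be accumulated by handles of bounded geometry in bounded volume. I expect this step, transferring the ends dichotomy to the subset of genus-accumulated ends, to be the main obstacle. I would handle it by rerunning the proof of Theorem \ref{intro:space of ends general} with the Markov process restricted to a measurably chosen family of disjoint handles.

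Combining these facts gives exactly the table. In infinite volume, $|\EndsInf{S}|\in\{1,2,\text{Cantor}\}$ and the genus is $0$ or $\infty$, with genus accumulating to all infinite-volume ends. That makes six types, plus the finite-volume case, and in each case cusps are either absent or dense, giving twelve in all. For the hyperbolic exclusion: a complete hyperbolic cylinder without cusps is $\langle\gamma\rangle\backslash\mathbb{H}^2$ with $\gamma$ hyperbolic or parabolic. The parabolic case has a cusp. In the hyperbolic case the closed geodesic is a canonical compact core, which is prohibited by the no-geometric-core principle, exactly as in Biringer--Raimbault.
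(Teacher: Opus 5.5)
There is a genuine gap. Your handling of the end space (via Theorem \ref{intro:space of ends general}) is sound. So is finite positive genus (a bounded core, as in Corollary \ref{cor:no bounded cores}). The hyperbolic cylinder is also handled correctly: a bounded core around its unique closed geodesic is a valid substitute for the paper's injectivity-radius core $A_{r,R}^{\mathrm{injrad}}$.

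The gap is the step you flag yourself: when $g=\infty$, showing that \emph{every} infinite-volume end is accumulated by genus. Your proposed route does not deliver this.
\begin{enumerate}
\item Rerunning the isolated-ends argument of part (1) on the closed set $\Ends{S}^{g}\cap\EndsInf{S}$ would at best tell you that this set has $0$, $1$ or $2$ points or is a Cantor set. That says nothing about whether it equals $\EndsInf{S}$.
\item Restricting the walk to a measurably chosen family of handles destroys the hypotheses of Theorem \ref{thm:main markov}. The induced chain is not bounded, and its state space is not a length space whose ends are those of $S$.
\item The claim that \enquote{handles escape to infinity along infinite-volume ends} is not justified. A priori the genus could sit in finite-volume ends, or along some infinite-volume ends, while another infinite-volume end has a planar neighbourhood. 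Ruling this out is exactly the content of the step.
\end{enumerate}

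Your instinct that this is a genus analogue of part (2) is right, and the analogy is exact. The bad configuration is itself a geometric core in the sense of Definition \ref{def:core}, with an \emph{unbounded} witness, just as $A_{r,v}^{\mathrm{finite}}$ is.

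Let $A_r^{\mathrm{genus}}$ be the set of $(M,p)$ such that $B_M(p,r)$ has positive genus and some infinite-volume component of $M\setminus B_M(p,r)$ has genus zero. For such $(M,p)$, take $U$ to be the points of that planar component at distance more than $r$ from the ball. Then $U$ has compact boundary and infinite volume. The property fails on $U$, because each $B_M(q,r)$ with $q\in U$ lies inside the planar component. So Theorem \ref{thm:main markov} applies to the original walk and gives $\nu(A_r^{\mathrm{genus}})=0$. Proposition \ref{prop:O retains 0} together with Proposition \ref{prop:on increasing r} then shows that almost surely no point of $M$ lies in any $A_r^{\mathrm{genus}}$.

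Now suppose $M$ has positive genus and an infinite-volume end with a planar neighbourhood $V\in\mathcal{U}(Q)$. Then the centre of a ball containing $Q$ and a handle lies in such a set, a contradiction. The same holds if the genus is finite and positive while the volume is infinite. So this single core gives both $g\in\{0,\infty\}$ and \enquote{genus accumulates at every infinite-volume end}, with no modification of the walk.

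Separately, your claim that finite-volume ends cannot carry genus \enquote{by the density statement} is not a valid deduction. Density of $\EndsFin{S}$ says nothing about the topology of individual finite-volume ends.
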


We refer the reader to \cite{ghys1995topologie} and to \cite{biringer2017ends} for nice illustrations of these infinite-type surfaces, and a justification for the imaginative \enquote{nicknames}.

\subsection{Method of proof}
Our results follow from a  \enquote{no geometric core} principle for stationary measures. A \emph{geometric core} is a Borel property $A$ of metric measure spaces, such that every metric measure space satisfying property $A$ admits some subset $U$ with compact boundary and of infinite measure  where property $A$ is \emph{not} satisfied. Roughly speaking, we show that such a geometric core property $A$ is null  with respect to all stationary metric measure spaces.  See Definition \ref{def:core} and Theorem \ref{thm:main markov} below for more details. 

As a special case, we obtain the following \enquote{no bounded core} result: Let $\nu$ be a stationary random pointed metric space which is stationary with respect to a good Markov kernel. If $A$ is a Borel property supported on bounded subsets of infinite measure spaces then $A$ is $\nu$-null; see Corollary \ref{cor:no bounded cores} for the precise statement.

We mention that our \enquote{no geometric core} principle is stronger than the \enquote{no bounded core} principle. To illustrate this, consider the \enquote{comb space} example. It is the space  homeomorphic to  $\RR$ with a copy of $\RR_{\ge 0}$ attached at all integer points. The space of ends of the comb space is homeomorphic to $\ZZ \cup \{\pm \infty\}$ with the obvious topology. Namely, the space of ends is infinite and compact, and all but two ends are isolated. To be able to rule out such a  possibility in Theorem \ref{intro:space of ends general}, we need to be able to regard the unbounded \enquote{handle} of the comb as our geometric core.

Our proof relies on a careful analysis of finite versus infinite expected return times for random walks (see Theorems \ref{thm:MT inequality} and \ref{thm:MT infinity} below). In our level of generality, namely Markov chains on general measure spaces, this information on return times can be found in the textbook of Meyn and Tweedie \cite{meyn2012markov}. For the convenience of the reader, we have decided to include direct proofs of these two theorems in Appendix \ref{appendix}.

Another idea that we make use of in our proof are \enquote{mirrored spaces}. This is a hands on construction, in which a random walk is modified so that it \enquote{reflects} any sample path that reaches a certain \enquote{mirror} subset. For instance, in the above comb space example, the mirrored space construction will allow us to isolate each individual \enquote{tooth} and ignore the \enquote{handle} of the comb. See \S\ref{sec:collapse} for details on this.

\subsection{Related works}

The classical result in the theory of ends of groups is Stalling's theorem \cite{stallings1968torsion}. It implies that the space of ends of (the Cayley graph of) a finitely generated group either has cardinality $0,1,2$ or is the Cantor space.

Stalling's theorem can be extended in various ways. One such result, which served as an inspiration and motivation for ours, is the beautiful paper of Biringer and Raimbault \cite{biringer2017ends}. They study unimodular random Riemannian manifolds, and establish in that context results directly analogous to our Theorems \ref{intro:space of ends general} and \ref{intro:space of ends surfaces}. Interestingly, our classification of the possible homeomorphism types of stationary surfaces coincides with their classification in the unimodular case (i.e. stationarity does not provide any new end space possibilities beyond unimodularity). 

We mention that while Biringer and Raimbault deal specifically with unimodular random Riemannian manifolds (see also  \cite{abert2022unimodular} for an in-depth study of this notion),  their proof is very robust. It essentially applies verbatim in the more general setting of Freudenthal unimodular random metric measure spaces. The end classification is stated explicitly in that generality in \cite[Proposition 5.14]{khezeli2023unimodular}.

The first result (that we are aware of) concerning ends in the stationary case is by Curien. It appears in his enlightening notes on random graphs \cite{curien2017random}. He shows that a random graph stationary with respect to the nearest neighbor random walk has either $0,1,2$ or infinitely many ends \cite[Corollary 24]{curien2017random}. However, it is not immediately clear if the methods in \cite{curien2017random} can be used  to deduce in the infinitely-many ends case that no end is isolated (i.e. that the space of ends is indeed a Cantor space).

Simultaneously to and independently of our work, Yair Hartman and Nadav Kalma have obtained a related result \cite{HK}. They show that the Schreier graph of a stationary random subgroup of a finitely generated group has either $0,1,2$ or infinitely many ends. In addition, they study stationary actions on probability spaces, and show that such actions have  \enquote{no core} (in a suitable dynamical sense). We note that, as in Curien's notes \cite{curien2017random}, the methods of Hartman and Kalma do not show that an infinite space of ends is a Cantor space.


\subsection{Acknowledgments} The authors would like to thank Yehuda Shalom for his interest, support, encouragement and clever advice.
The authors would like to thank Ian Biringer for numerous useful and careful comments and suggestions which greatly enhanced our exposition, and for pointing out a problem in an earlier version.

\section{Random walks on metric measure spaces}

In this section we introduce all the basic notions needed to state our results.

\subsection{Metric measure spaces}

A metric measure space is essentially a space equipped with both a metric and a measure in a compatible manner. More precisely, we use the following definition from \cite[§3]{Bowen15}.
\begin{definition} 
\label{def:MMS}
A \emph{metric measure space} is a triplet $(M, d_M , \vol_M )$ where $(M, d_M )$ is a separable proper metric space and $\vol_M$ is a positive Radon measure on M. The measure $\vol_M$ can be finite or infinite.
A \emph{pointed metric measure space} is a quadruple $(M, d_M , \vol_M , p)$ such that $(M, d_M , \vol_M )$ is a metric measure space and $p \in M$ is a point. 
\end{definition} 

Let $\MM^0$ denote the space of all metric measure spaces. Likewise, let $\MM^1$ denote the space of pointed metric measure spaces. It will be convenient for us to introduce the shorthand notation $\MM = \MM^1$. In addition, whenever convenient we will omit the explicit mention of the metric $d_M$ and the measure $\vol_M$. They are understood as being implicitly associated to the space M.

We endow the space $\MM$ with the topology described in \cite[Definition 5]{Bowen15}.
It is essentially a combination of the pointed Gromov–Hausdorff and the weak-$*$ topologies (the first taking into account the metric structure, and the second the measure).

\begin{notation}
Given a metric measure space $(M,d_M,\vol_M) \in \MM^0$ and a point $p \in M$ it will be convenient to use the notation
$$ \typ{p} = (M,d_M,\vol_M,p) \in \MM.$$
Likewise, for any Borel subset $E \subset M$ we denote
$$ \typ{E} = \{\typ{p} \: : \: p \in E\} \subset \MM.$$
\end{notation}
These notations associate to a point or a subset of a given metric measure space the subset of $\MM$ representing it.

\begin{definition}
\label{def:saturated}    
A \emph{saturated class} is a Borel subset $\SAT \subset \MM$ such that for every pointed metric measure space $(M,p) \in \SAT$ we have $\typ{M} \subset \SAT$.
\end{definition}

In other words, a saturated class is a Borel family of pointed metric spaces which is invariant under a change of basepoint. It can be thought of as being a pullback of some subset under the forgetful map $\MM^1\to\MM^0$.

\begin{example}
\label{example:classes of MMS}
Here are some natural saturated classes:
\begin{enumerate}
    \item Let $\Graphs \subseteq \MM$ consist of pointed connected locally finite graphs equipped with the graph metric. The measure is atomic and supported on the vertices, with the measure of each vertex being equal to its degree. 
    \item Let $\Complexes \subseteq \MM$ consist of pointed locally finite and finite-dimensional simplicial complexes equipped with the intrinsic metric and an atomic measure supported on the vertices, such that the measure of the vertex is its degree in the $1$-skeleton.
    \item Let $\Hypers \subseteq \MM$ consist of pointed hyperbolic manifolds equipped with the hyperbolic distance and the hyperbolic volume measure.
    \item More generally, let $\Riemanns \subseteq \MM$ consist of pointed Riemannian manifolds equipped with the Riemannian distance and the Riemannian volume measure.
\end{enumerate}
These classes satisfy $\Graphs\subseteq \Complexes$ and $\Hypers\subseteq \Riemanns$.
\end{example}

Another basic example of a saturated class is provided by $\typ{M}$ for any single given metric measure space $(M,d_M,\vol_M)\in \MM$.

\begin{remark}
All the spaces we consider in this work will be connected. All the graphs are locally finite. 
\end{remark}



\subsection{Markov kernels on metric measure spaces}
\label{seubsec:Markov kernel}

We develop a notion of random walks on random metric measure spaces. Our proof will crucially rely on studying such random walks and their properties. The random walks we consider are driven by certain Markov kernels, in the following sense. 

\begin{definition}
\label{def:Markov kernel}
Let $\SAT \subset \MM$ be a saturated class. A \emph{Markov kernel} with domain $\SAT$ is a Borel function $$K:\SAT\to \Prob{\SAT }, \quad x \mapsto K_x$$ such that  every pointed metric measure space $x = (M,d_M ,\vol_M ,p) \in \SAT$ satisfies
    $K_x(\typ{M})=1$.
\end{definition}

We understand $K_x$ for some $x = (M,p) \in \MM$ to be the transition probability of the random walk, starting on the space $M$ and at the point $p \in M$.

Given a fixed metric measure space $M \in \MM^0$,  a \emph{Markov kernel on $M$} is understood to be a Markov kernel whose domain is the saturated class $\typ{M}$. This is just a Borel map $K : M \to \mathrm{Prob}(M)$ encoding the transition probabilities of this random walk. A general Markov kernel restricts to a Markov kernel on every space in its domain.

 Assume that $K$ and $N$ are a pair of Markov kernels with a common domain $\SAT$.  Their \emph{convolution product} $N * K$ is the Markov kernel with domain $\SAT$ defined by
$$(N*K)_x(E)=\int_\MM N_y (E) \; \mathrm{d}K_x(y)$$
for every point $x \in \SAT$ and every Borel subset $E \subset \SAT$.  The \emph{convolution power} $K^{*n}$ is defined inductively via $K^{*n} = K*K^{*(n-1)}$ for each $n \in \NN$. 

We proceed to introduce the important notion of stationarity\footnote{In some sources, such as the textbook \cite{meyn2012markov}, the property of being stationary with respect to a Markov kernel is termed \emph{invariance}.} with respect to a Markov kernel. 

\begin{definition}
\label{def:stationary and convolution}
Let $K$ be a Markov kernel with domain $\SAT$. Let $\nu$ be a Borel measure on $\SAT$ (i.e. a measure on $\MM$ satisfying $
\nu(\MM \setminus \SAT)=0$). 
 The convolution $K * \nu$ is the Borel measure on $\SAT$ defined by
$$(K * \nu)(E)=\int_\MM K_x(E) \; \mathrm{d}\nu(x)$$
for each Borel subset $E \subset \SAT$.
The measure $\nu$ is called \emph{stationary with respect to $K$} if $K*\nu =\nu$.
\end{definition}

Note that a measure stationary with respect to a Markov kernel is required to be compatible with it, in the sense that it has to be supported on its domain. For this reason, we will sometimes drop the explicit mention of the domain when talking about stationary measures in this sense.

A measure stationary with respect to some Markov kernel $K$ is also stationary with respect to all of its convolution powers $K^{*n}$.

We point out that Definition \ref{def:stationary and convolution} allows for the measure $\nu$ to be either finite or infinite.


\begin{example}
\label{example:Markov chains}
Here are some examples of Markov kernels.
    \begin{enumerate}
        \item The \emph{nearest neighbor random walk} corresponds to a Markov kernel with domain $\Graphs$. For a pointed graph $x=(G,p) \in \Graphs$ where $p$ is a vertex of $G$, let $K_x$ be the uniform probability measure supported on the neighbors of the vertex $p$ in the graph $G$\footnote{Formally speaking, we are required to define the transition probability $K_x$ for all pairs $(G,p)$ where $p$ is \emph{any} point on the graph $G$. If $p$ lies in the interior of an edge $e =\{u,v\}$, we may take $K_x = \frac{1}{2}(\delta_u + \delta_v)$.}. A probability measure $\nu$ stationary with respect to this kernel is a \emph{stationary random graph}.
        
        \item Let $G$ be a connected unimodular Lie group with a fixed right-invariant metric. Consider the saturated class $\mathrm{GH}(G)$ consisting of all quotients of the form $G/\Gamma$ where $\Gamma \le G$ is a discrete subgroup. Each such quotient is equipped with its natural quotient metric and measure. 
        
        Fix a Borel probability measure $\mu$ on $G$. It defines a Markov kernel $K_
        \mu$ with domain  $\mathrm{GH}(G)$ via the action arising from conjugating the subgroup $\Gamma$. A probability measure $\nu$ stationary with respect to the kernel $K_\mu$ is a \emph{discrete $\mu$-stationary random subgroup of $G$}.
        
	\item Fix a probability measure $\lambda \in \Prob{\RR_{> 0}}$. We define a Markov kernel $K_\lambda$ with domain $\Riemanns$ as follows. For each pointed Riemannian manifold  $x = (M,p) \in \Riemanns$ define $\lambda_p \in \mathrm{Prob}(T_p M) $  by picking a direction uniformly at random and the norm with respect to $\lambda$. The measure $(K_\lambda)_x \in \Prob{M}$ is obtained by pushing forward $\lambda_p$ via the exponential map. 

    \end{enumerate}
\end{example}

\subsection{Good Markov kernels}

We introduce several \enquote{good} properties of Markov kernels that make them easier to work with.

\begin{notation}
Let $K$ be a Markov kernel with domain $\SAT$. Given a metric measure space $x = (M,d_M ,\vol_M ,p) \in \SAT$ and a pair of Borel subsets $A,B\subset M$ we denote
    $$K_{A\to B} = \int_A K_{\typ{q}}(B) \;\mathrm{d} \mathrm{vol}_M(q).$$
\end{notation}

\begin{definition}
\label{def:properties of Markov kernels}
Let $K$ be a Markov kernel with domain $\SAT$.
\begin{itemize}
\item 
\begin{sloppypar}
$K$ is called \emph{intrinsically stationary} if for every pointed metric space \mbox{$(M,p) \in \SAT$} the measure $\vol_M$ is stationary with respect to $K$ regarded as a Markov kernel on $M$, i.e. $K * \vol_M = \vol_M$.
\end{sloppypar}
    \item $K$ is called \emph{intrinsically reversible} if for every pointed metric measure space $(M,p) \in \SAT$ and for every pair of Borel subsets $A,B\subset M$ we have
    $$K_{A\to B} = K_{B\to A}.$$
    
    \item $K$ is called \emph{$R$-bounded} for  some $R>0$ (or, simply \emph{bounded}) if every pointed metric measure space $x=(M,p) \in \SAT$ satisfies $$K_x(\typ{B_{(M,d_M )}(p,R)})=1.$$

     \item $K$ is called \emph{$\varepsilon$-centralized} (or, simply \emph{centralized}) if there is a  constant $\varepsilon>0$ such that for  every pointed metric measure space $x=(M,p) \in \SAT$ there is a constant $c_x > 0$ so that every Borel set $E\subseteq B_{(M,d_M)} (p,\varepsilon )$ satisfies
     $$K_x(\typ{E}) \ge c_x \cdot \vol_M (E).$$
\item $K$ is called \emph{intrinsically irreducible}  if
     every pointed metric measure space $x=(M,p)\in\SAT$ satisfies  $\vol_M \ll \sum_{n=1}^\infty K^{*n}_x$.
\end{itemize}
\end{definition} 

In the classical case of a random walk on a group, the four properties of the kernel being intrinsically reversible, bounded, centralized and intrinsically irreducible roughly correspond to the driving measure being symmetric, compactly supported, absolutely continuous and generating.

It will be convenient for us to introduce the following shorthand terminology.

\begin{definition}
\label{def:good}
A Markov kernel is \emph{good} if it is intrinsically stationary, intrinsically reversible, intrinsically irreducible, bounded and centralized   (all in the sense of Definition \ref{def:properties of Markov kernels}). It is  \emph{$R$-good} if it is good and in particular $R$-bounded for some $R > 0$.
\end{definition}

\begin{example}
\label{example:Markov properties}
We revisit the Markov kernels introduced in Example \ref{example:Markov chains} and analyze their properties.
    \begin{enumerate}
        \item The Markov kernel corresponding to the nearest neighbor random walk with domain $\Graphs$ is intrinsically reversible, intrinsically stationary, intrinsically irreducible and bounded  but not  centralized.  However, the two-step kernel $\frac{K + K^{*2}}{2}$ is good.
         
        \item Let $G$ be a connected unimodular Lie group and $\mu$ a Borel probability measure on $G$. It is intrinsically stationary. The Markov kernel $K_\mu$ with domain $\mathrm{GH}(G)$ is bounded if $\mu$ has compact support. It is centralized if $\mu \ge c \cdot m_G$ for some $c > 0$ on some identity neighborhood, and is  intrinsically reversible if $\mu$ is symmetric. Lastly, it is intrinsically irreducible if, say, it is centralized and also the measure $\mu$ is generating.
        
	\item Fix a probability measure $\lambda \in \Prob{\RR_{> 0}}$. The Markov kernel $K_\lambda$ with domain $\Riemanns$ is intrinsically stationary and intrinsically reversible  \cite[Theorem 3.1]{abert2022unimodular}.
 It is  bounded if $\lambda$ has bounded support. It is centralized and intrinsically irreducible if, say,  $\lambda \ge c \cdot \mathrm{Leb}$ on some interval $\left[0,\varepsilon\right]$. 
    \end{enumerate}
\end{example}

\begin{remark}
In the situation of  Example \ref{example:Markov properties}.(2), assume that $\mu$ satisfies the weaker conditions of being symmetric, compactly supported and spread out. Then the Markov kernel corresponding to some convolution power $\mu^{*n}$ will be bounded, intrinsically reversible and centralized.
\end{remark}







We introduce a  \enquote{measurable saturation} operation, which is shown to behave nicely in the presence of intrinsically irreducible Markov kernels.

\begin{notation}
\label{def:fatten}
Let $A \subset \MM$ be a Borel subset. Denote
$$\Fatten{A} = \{(M, p)\in\MM \: : \: \vol_M (\{q \in M \: : \: \typ{q} \in A \})>0\} \subset \MM.$$
\end{notation}

Note that $\Fatten{A}$ is also a Borel subset of $\MM$.

\begin{prop}\label{prop:O retains 0}
Let $K$ be a intrinsically irreducible Markov kernel with domain $\SAT$. Let $\nu$ be a probability measure stationary with respect to $K$.  Let $A\subseteq\SAT$ be a $\nu$-null set. 
Then $\nu(\Fatten{A})=0$.
\end{prop}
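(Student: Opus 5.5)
Stationarity of $\nu$ under $K$ passes to every convolution power $K^{*n}$, so I will play it off against intrinsic irreducibility. Since $A$ is $\nu$-null, for every $n\in\NN$
$$0=\nu(A)=(K^{*n}*\nu)(A)=\int_\MM K^{*n}_x(A)\;\mathrm{d}\nu(x).$$
Hence for $\nu$-almost every $x$ we have $K^{*n}_x(A)=0$ for all $n\ge 1$ simultaneously (a countable intersection of full-measure sets). Put $\sigma_x=\sum_{n\ge1}2^{-n}K^{*n}_x$, a probability measure with the same null sets as $\sum_n K^{*n}_x$. Then $\sigma_x(A)=0$ for $\nu$-almost every $x$.

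Next, fix such an $x=(M,p)\in\SAT$. Because $K$ is a Markov kernel with domain $\SAT$, every $K^{*n}_x$ is carried by $\typ{M}$. So $K^{*n}_x(A)=K^{*n}_x(A\cap\typ{M})$, and I can view it as a measure on $M$ via the map $q\mapsto\typ{q}$. Under this identification, $A\cap\typ{M}$ corresponds to the set $A_M=\{q\in M : \typ{q}\in A\}$, and $\sigma_x(A_M)=0$. Intrinsic irreducibility says $\vol_M\ll\sum_n K^{*n}_x$, so $\vol_M(A_M)=0$. By the definition of $\Fatten{A}$, this means exactly $x\notin\Fatten{A}$. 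Therefore $\nu(\Fatten{A})=0$.

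The argument is essentially routine. The main point needing care is measurability: $A_M$ must be Borel in $M$, and the identification of measures on $\typ{M}\subset\MM$ with measures on $M$ must be justified. The map $q\mapsto\typ{q}$ is continuous from $M$ to $\MM$ in the topology of \cite{Bowen15}, so $A_M$ is the preimage of a Borel set and is Borel. The map may fail to be injective when $M$ has isometries, so I read the irreducibility condition $\vol_M\ll\sum_n K^{*n}_x$ through pushforward to $\MM$, i.e. as absolute continuity of the pushforward of $\vol_M$ with respect to $\sum_n K^{*n}_x$ on $\typ{M}$. That is precisely the form used above, so non-injectivity causes no problem. Measurability of $\Fatten{A}$ itself was already noted in the paper.
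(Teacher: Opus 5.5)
Your proof is correct and is the same argument as the paper's. Stationarity under every $K^{*n}$ forces $K^{*n}_x(A)=0$ for all $n$ at $\nu$-almost every $x$, and intrinsic irreducibility then gives $\vol_M(\{q : \typ{q}\in A\})=0$, i.e.\ $x\notin\Fatten{A}$; the paper phrases this contrapositively as the inclusion $\Fatten{A}\subseteq\bigcup_n\{y : K^{*n}_y(A)>0\}$. Your $2^{-n}$ normalization and your reading of $\vol_M\ll\sum_n K^{*n}_x$ through the pushforward to $\typ{M}$ are harmless clarifications of points the paper leaves implicit.
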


\begin{proof}
The stationarity of the measure $\nu$ implies for all $n \in \NN$ that
$$\int_\SAT \int_A \mathrm{d}K^{*n}_y\;\mathrm{d}\nu(y) = \int_A \; \mathrm{d}(K^{*n}*\nu) = \nu(A)=0.$$
Therefore, to prove the proposition it will suffice to establish the inclusion
$$\Fatten{A} \subseteq \bigcup_{n=1}^\infty \{y\in\SAT \: :\:K^{*n}_y (A)>0\}$$ up to measure zero.
The intrinsic irreducibility assumption says that every point  $y=(M,p)\in \SAT$ satisfies $\vol_M \ll \sum_{n=1}^\infty K_y^{*n}$. If such a point satisfies  $y \in \Fatten{A}$ then we get $\vol_M (\{q \in M \: : \: \typ{q} \in A \})>0$. Hence $K^{*n}_y(A)>0$ for some $n \in \NN$, as required.
\end{proof}


\subsection{Random walks and expected return times}
Throughout this subsection,  fix a Markov kernel (with domain $\SAT$) and a random pointed metric measure space $\nu$ stationary with respect to $K$, i.e. a probability measure $\nu$ on $\MM$ satisfying $K*\nu = \nu$ (and $\nu(\SAT)=1$). Our goal is to introduce random walks driven by $K$.

\begin{notation}
Denote $$\MM^\infty = \{ (M,d_M,\vol_M; \omega = (p_i)_{i\ge0}) \, : \, p_i \in M \}.$$
Namely $\MM^\infty$ is the space of metric measure spaces equipped with a sequence of points $\omega$.
\end{notation}
    
For every point $x = (M;p) \in \SAT$ we  construct a probability measure $\mathbb{P}_K^x$ on the space $\MM^\infty$, corresponding to the distribution of sample paths starting on the space $M$ and at the point $p$. Namely, we set $p_0= p$, and each consecutive point $p_{i+1} \in M$ is drawn randomly from the distribution $K_{p_i}$.

 Let $f$ be a Borel function on the space $\MM^\infty$. For every point $x = (M;p) \in \MM$ we write
    $$\mathbb{E}_{x,K}(f)=\int_{\mathbb{Y}}f \; \mathrm{d}\mathbb{P}_K^x. $$
Namely, this is the expectation of $f$ over a random sample path starting on the space $M$ and at the point $p$. Inside the expectation symbol, we will use $(M,p_i)$ to denote the $i$-th position of the random walk (this is a random variable).

    \begin{notation} 
    Let $A \subset \MM$ be a Borel subset. The \emph{return time to $A$} is a Borel function on $\MM^\infty$ which we denote by $\tau_A$. It is given by
 $$\tau_A (M;\omega=(p_i))=\min\{i\geq 1 \: : \: (M,p_i) \in A\}$$  
 where $\min \emptyset = \infty$ by convention. Note that $\tau_A \ge 1$, namely the circumstance where $p_0 \in A$ does not count as a \enquote{return}.
\end{notation}

The following measure will play an important role in our approach.

\begin{definition}
\label{def:nu_A}
Let $A \subset \MM$ be a $\nu$-measurable subset. We introduce the measure $\nu_A$ on $\MM$ given by
$$\nu_A (E) = \int_A \mathbb{E}_{x,K}\left(\sum_{i=1}^{\tau_A }1_E (\typ{p_i} )\right) \; \mathrm{d} \nu(x)$$
for each Borel subset $E \subset \MM$.
\end{definition}

In other words $\nu_A(E)$ is the expected number of visitations to $E$ of a random sample path starting at a $\nu$-random point in $A$ before it returns to $A$.


\begin{theorem}[{\cite[Theorem~10.4.6]{meyn2012markov}}]
\label{thm:MT inequality}
Let $A \subset \MM$ be a $\nu$-measurable subset with $\nu(A) > 0$. Then $\nu_A \leq\nu$.
\end{theorem}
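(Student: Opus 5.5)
We prove $\nu_A \le \nu$ by the standard last-exit/first-entrance decomposition, using only that $\nu$ is $K$-stationary (here $\nu$ is a probability measure on $\SAT$). The idea is to write $\nu_A$ as a series of nonnegative measures, each obtained by restricting $K$ to the complement of $A$. Then we show by induction that every partial sum stays below $\nu$.

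First, introduce the taboo kernel $Q_x(E) = K_x(E \setminus A)$, i.e.\ $K$ killed upon entering $A$. Decompose the visits in the definition of $\nu_A$ according to the time $i$ at which they occur. The event $\{i \le \tau_A\}$ means that $p_1,\dots,p_{i-1} \notin A$. By the Markov property, for every Borel $E$,
\[
\nu_A(E) = \sum_{i=1}^\infty \bigl(K * Q^{*(i-1)} * (\nu|_A)\bigr)(E),
\]
where convolution of a kernel with a measure is as in Definition~\ref{def:stationary and convolution}. Concretely, $\mu_1 = K*(\nu|_A)$ and $\mu_{i+1} = K*(\mu_i|_{A^c})$. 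All terms are nonnegative, so exchanging sum and integral is justified by monotone convergence.

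Next, show by induction on $n$ that
\[
\sum_{i=1}^{n} \mu_i \le \nu - \mu_{n+1}|_{A^c}\quad\text{restricted suitably},
\]
or more cleanly,
\[
\sum_{i=1}^n \mu_i|_{A} + \mu_n|_{A^c} \le \nu .
\]
This follows from the stronger identity-type bound
\[
\nu \ge \nu|_A\text{-driven mass} \;:\; \nu = K*\nu = K*(\nu|_A) + K*(\nu|_{A^c}) \ge \mu_1 + K*(\nu|_{A^c}).
\]
Iterating, write $\nu|_{A^c} \ge \mu_1|_{A^c}$. Here the inductive hypothesis is $\nu \ge \sum_{i\le n}\mu_i|_A + \mu_n|_{A^c}$ together with $\nu|_{A^c} \ge \mu_n|_{A^c}$. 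Applying $K$ and stationarity gives
\[
\nu = K*\nu \ge K*(\nu|_A) + K*(\mu_n|_{A^c}) = \mu_1 + \mu_{n+1}.
\]
A cleaner route is to prove directly that
\[
\nu \ge \sum_{i=1}^{n}\mu_i|_A + \mu_n \quad\text{fails in general}.
\]
So the bookkeeping I will actually use is the classical one. Define $\sigma_n = \sum_{i=1}^n \mu_i$ and prove $\sigma_n \le \nu$ by showing $\sigma_{n} \le K*(\nu|_A + \sigma_{n-1}|_{A^c})$ with $\sigma_0=0$. Indeed $\sigma_n = \mu_1 + K*(\sigma_{n-1}|_{A^c})$ exactly, by the recursion. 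The induction $\sigma_{n-1}\le\nu$ then gives
\[
\sigma_n \le K*(\nu|_A) + K*(\nu|_{A^c}) = K*\nu = \nu.
\]
Letting $n\to\infty$ yields $\nu_A \le \nu$ by monotone convergence.

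The main point requiring care is the first step. We must justify rigorously that the path-space expectation in Definition~\ref{def:nu_A} equals the series $\sum_i \mu_i$. This is an application of the Markov property of $\mathbb{P}^x_K$ together with Fubini–Tonelli for nonnegative integrands. It also needs measurability of $x\mapsto \mathbb{P}_K^x(\cdot)$, which follows from $K$ being Borel. One should also note that $A$ is only $\nu$-measurable; we replace it by a Borel set differing by a $\nu$-null set. This does not affect $\nu|_A$, and by stationarity and Proposition~\ref{prop:O retains 0}-type reasoning (each $\mu_i \le \nu$ inductively) it does not affect the later terms either. The inductive inequality itself is immediate once the recursion $\sigma_n = K*(\nu|_A + \sigma_{n-1}|_{A^c})$ is in place. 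Note that this argument never uses $\nu(A)>0$, which only serves to make the statement non-vacuous.
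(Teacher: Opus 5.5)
Your final argument is correct and is essentially the paper's proof: your partial sum $\sigma_n$ is exactly the term $\int_A \sum_{m=1}^n P_A^m(x,\cdot)\,\mathrm{d}\nu(x)$ in the paper's inductive identity. The paper carries the remainder $\int_{\MM\setminus A}P_A^n(x,\cdot)\,\mathrm{d}\nu(x)$ along as an exact equality, using stationarity to prepend a step, and drops it at the end, whereas you get $\sigma_n\le\nu$ directly from $\sigma_n = K*(\nu|_A+\sigma_{n-1}|_{A^c})$ and monotonicity of $K*$. In a write-up, delete the abandoned attempts that precede \enquote{the bookkeeping I will actually use}, since they only obscure this clean induction.
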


A complete proof of Theorem \ref{thm:MT inequality} adapted from \cite{meyn2012markov} is given in Appendix \ref{appendix} below. Here, we derive an immediate corollary of Theorem \ref{thm:MT inequality}, saying that almost every point of $A$ has a finite expected return time.

\begin{cor}\label{cor:0 sets}
Let $A \subset \MM$ be a $\nu$-measurable subset.  Then $$\nu(\{x\in A \: : \:\mathbb{E}_{x,K}(\tau_A)=\infty\})=0.$$ 
\end{cor}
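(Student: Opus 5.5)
Apply Theorem \ref{thm:MT inequality} with $E = \MM$. We may assume $\nu(A)>0$, since otherwise the set in question is contained in the null set $A$ and there is nothing to prove. The plan is then to write $\nu_A(\MM)$ as an integral of expected return times and compare it with the total mass of $\nu$.

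First observe that for $E=\MM$ the indicator $1_\MM(\typ{p_i})$ is identically $1$. Hence along every sample path
$$\sum_{i=1}^{\tau_A} 1_\MM(\typ{p_i}) = \tau_A,$$
with the convention that both sides equal $\infty$ when $\tau_A=\infty$. Taking expectations and integrating over $A$ gives
$$\nu_A(\MM) = \int_A \mathbb{E}_{x,K}(\tau_A)\;\mathrm{d}\nu(x).$$

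Theorem \ref{thm:MT inequality} gives $\nu_A(\MM) \le \nu(\MM) = 1$, since $\nu$ is a probability measure. So the nonnegative (possibly infinite-valued) measurable function $x \mapsto \mathbb{E}_{x,K}(\tau_A)$ has finite integral over $A$ with respect to $\nu$. A function with finite integral is finite almost everywhere: if it were infinite on a subset of $A$ of positive $\nu$-measure, its integral would be infinite. Therefore $\nu(\{x\in A : \mathbb{E}_{x,K}(\tau_A)=\infty\})=0$.

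The only point needing care is measurability. The map $x\mapsto \mathbb{E}_{x,K}(\tau_A)$ must be Borel, or at least $\nu$-measurable, and the identity above must hold with values in $[0,\infty]$. Both follow from the monotone convergence theorem applied to the truncations $\min(\tau_A,n)$. Each truncation is a finite sum of Borel functions of the path, and the path measures $\mathbb{P}^x_K$ depend measurably on $x$ because $K$ is a Borel kernel. With this in place the argument is a direct consequence of the theorem, and I do not expect any real obstacle.
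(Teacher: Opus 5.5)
Your proposal is correct and follows the paper's own proof essentially verbatim. Like the paper, you reduce to $\nu(A)>0$, identify $\nu_A(\MM)$ with $\int_A \mathbb{E}_{x,K}(\tau_A)\,\mathrm{d}\nu(x)$, bound this by $\nu(\MM)=1$ via Theorem~\ref{thm:MT inequality}, and conclude that the integrand is finite $\nu$-almost everywhere on $A$; your monotone-convergence remark on measurability is a harmless detail the paper leaves implicit.
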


\begin{proof}
We assume without loss of generality that $\nu(A) > 0$, for otherwise there is nothing to prove.
Let us apply Theorem \ref{thm:MT inequality} with respect to the set $A$. This gives
    $$\int_A \mathbb{E}_{x,K}(\tau_A)\;\mathrm{d}\nu (x) = \nu_A(\MM) \leq \nu(\MM) = 1.$$
In particular the integral is finite. Hence the integrand $\mathbb{E}_{x,K}(\tau_A)$ must be finite $\nu$-almost surely on $A$.
\end{proof}

Our main results follow from the tension between the above Theorem \ref{thm:MT inequality} in the presence of a \emph{finite} stationary measure, and the following Theorem \ref{thm:MT infinity} for \emph{infinite} stationary measures.

\begin{theorem}[{\cite[Theorem~10.4.10]{meyn2012markov}}]
\label{thm:MT infinity}
Let $\alpha$ be an \emph{infinite} measure on $\MM$ stationary with respect to the Markov kernel $K$. Assume that $K$ is intrinsically irreducible. Let $A \subset \MM$ be a $\alpha$-measurable subset with $\alpha(A)>0$ and such that $\alpha(\MM \setminus \Fatten{A}) = 0$.
Then
$$\nu_A(\MM)=\int_A \mathbb{E}_{x,K}(\tau_A) \;\mathrm{d}\alpha(x)=\infty.$$
\end{theorem}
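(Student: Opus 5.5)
The plan is to follow the classical proof of Kac-type results for recurrent chains with an infinite invariant measure (Meyn--Tweedie, Theorem 10.4.10). We argue by contradiction: assume $\int_A \mathbb{E}_{x,K}(\tau_A)\,\dd\alpha(x) < \infty$. Here the measure in the statement is understood as $\alpha_A$, defined as in Definition \ref{def:nu_A} with $\alpha$ in place of $\nu$. The aim is to show that $\alpha_A$ is then a finite $K$-stationary measure that coincides with $\alpha$. This contradicts $\alpha$ being infinite.

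\textbf{Step 1: $\alpha_A$ is stationary.} Fix $x\in A$ and write $\sum_{i=1}^{\tau_A} 1_E(p_i) = \sum_{i\ge 0} 1_{\{i<\tau_A\}} 1_E(p_{i+1})$. Applying the Markov property at time $i$ (the event $\{i<\tau_A\}$ is measurable with respect to $p_1,\dots,p_i$), we get
\[
(K*\alpha_A)(E) = \alpha_A(E) - \int_A \mathbb{P}^x_K(p_1\in E)\,\dd\alpha(x) + \int_A \mathbb{P}^x_K(p_{\tau_A}\in E,\ \tau_A<\infty)\,\dd\alpha(x).
\]
The second integral equals $(K*\alpha|_A)(E)$. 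The third is the hitting distribution on $A$, and it can be rewritten as $\alpha_A(E\cap A)$. The key identity to establish is therefore $\alpha_A|_A = \alpha|_A$. Together with $\alpha_A(A^c) = \sum_{n} (\text{taboo kernel})$, this is what gives stationarity.

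\textbf{Step 2: $\alpha_A|_A = \alpha|_A$ and $\alpha_A \le \alpha$.} The appendix proof of Theorem \ref{thm:MT inequality} presumably proceeds by writing
\[
\alpha = \alpha|_A + \alpha|_{A^c} = \alpha|_A + K*\alpha|_{A^c} + \dots,
\]
iterating stationarity, and splitting according to the last visit to $A$. This yields $\alpha \ge \alpha_A$, together with equality on $A$ once every point of $A$ returns to $A$ almost surely. The finiteness assumption forces $\tau_A<\infty$ almost surely for $\alpha$-a.e. $x\in A$. So I would reuse that argument verbatim for the $\sigma$-finite measure $\alpha$, which is $\sigma$-finite since $\alpha$ is Radon-like on $\MM$; the argument only needs stationarity.

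\textbf{Step 3: $\alpha_A = \alpha$, which is the main obstacle.} The difference $\beta = \alpha - \alpha_A \ge 0$ is stationary, since both measures are, provided we can subtract. Subtraction is legitimate on sets where $\alpha$ is finite, which is enough. Moreover $\beta(A)=0$ by Step 2. Now Proposition \ref{prop:O retains 0}, applied to $\beta$ via intrinsic irreducibility (its proof uses only stationarity and does not need finiteness), gives $\beta(\Fatten{A}) = 0$. Since $\alpha(\MM\setminus\Fatten{A})=0$, we get $\beta = 0$, hence $\alpha = \alpha_A$. The delicate point is handling $\infty-\infty$ when forming $\beta$. I would address this by working on an exhaustion by sets $E_n$ with $\alpha(E_n)<\infty$, checking $K*\beta\le\beta$ and $\beta(A)=0$ directly on such sets. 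I would then use the argument of Proposition \ref{prop:O retains 0}: if $\beta(E)>0$ for some $E \subseteq \Fatten{A}$, then $\int_E K^{*n}_y(A)\,\dd\beta(y) \le \beta(A) = 0$, contradicting irreducibility.

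\textbf{Step 4: conclusion.} We obtain $\alpha(\MM) = \alpha_A(\MM) = \int_A \mathbb{E}_{x,K}(\tau_A)\,\dd\alpha(x) < \infty$, which contradicts the assumption that $\alpha$ is infinite.
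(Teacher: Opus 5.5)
Your overall route is the paper's. You assume $\alpha_A(\MM)<\infty$, deduce almost sure return from $\alpha$-a.e.\ point of $A$, get $\alpha_A\le\alpha$ with equality on $A$, and conclude that $\alpha_A$ is stationary. Irreducibility then shows that the stationary measure $\alpha-\alpha_A$, which vanishes on $A$, vanishes on $\Fatten{A}$ and hence everywhere, contradicting $\alpha(\MM)=\infty$.

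However, the displayed identity in Step 1 is wrong: the boundary term is off by one step of the kernel. Since $(K*\alpha_A)(E)=\int_A\sum_{i\ge 1}\mathbb{E}_{x,K}\bigl(1_{\{i\le\tau_A\}}K_{p_i}(E)\bigr)\,\dd\alpha(x)$, the last term should be $\int_A\mathbb{E}_{x,K}\bigl(1_{\{\tau_A<\infty\}}K_{p_{\tau_A}}(E)\bigr)\,\dd\alpha(x)=(K*(\alpha_A|_A))(E)$. It is not the hitting distribution $\alpha_A(E\cap A)$ itself. As written, your identity drops a generally nonzero term whenever $E\cap A=\emptyset$. Combined with $\alpha_A|_A=\alpha|_A$, it would then demand $(K*(\alpha|_A))(E)=\alpha(E\cap A)$, which is false in general. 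The corrected identity is $K*\alpha_A=\alpha_A-K*(\alpha|_A)+K*(\alpha_A|_A)$, equivalent to the paper's $\alpha_A=K*(\alpha|_A)+K*(\alpha_A|_{\MM\setminus A})$. With it, your key identity $\alpha_A|_A=\alpha|_A$ does give stationarity.

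Two smaller points. First, equality on $A$ does not come out of rerunning the proof of Theorem~\ref{thm:MT inequality}. It needs the extra observation $\alpha_A(A)=\int_A\mathbb{P}^x_K(\tau_A<\infty)\,\dd\alpha(x)=\alpha(A)$, combined with $\alpha_A\le\alpha$ on subsets of $A$. This upgrade from inequality to equality also requires $\alpha(A)<\infty$, which holds because $\alpha(A)=\alpha_A(A)\le\alpha_A(\MM)<\infty$; you should state this. Second, the $\infty-\infty$ issue in Step 3 does not arise. Under the contradiction hypothesis $\alpha_A$ is a finite measure, so $\alpha-\alpha_A$ is already a well-defined nonnegative stationary measure. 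No exhaustion argument, and no $\sigma$-finiteness of $\alpha$, is needed.
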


A complete and detailed proof of Theorem \ref{thm:MT infinity} adapted from the textbook \cite{meyn2012markov} is given in Appendix \ref{appendix} below.

\begin{remark} 
We intentionally use a different notation $\alpha$ for an infinite measure satisfying the particular set of assumptions in Theorem \ref{thm:MT infinity}. We will only have occasion to use this theorem in the situation where $\alpha$ coincides with the measure $\mathrm{vol}_M$ for some specific infinite volume metric measure space and $K$ is a Markov kernel on that space. This is a good example to keep in mind.
\end{remark}

\section{Mirrored metric measure spaces}
\label{sec:collapse}

Let $(M,d_M,\mathrm{vol}_M)$ be a metric measure space and $K$ be a Markov kernel on the space $M$. Assume that $M$ is a length space\footnote{The metric space $M$ is proper (it is a part of the definition of metric measure spaces). Any proper length space is also a geodesic space by the Hopf--Rinow theorem in metric geometry \cite[Proposition I.3.7]{bridson2013metric}} and that that $K$ is intrinsically stationary (i.e. satisfies $K * \vol_M = \vol_M$), intrinsically reversible  and $R$-bounded for some $R > 0$.

Given a particular subset $X \subset M$, we would like to be able to modify random walks on $M$, in such a way that they become \enquote{reflected} when hitting $X$. To do so, we define a \enquote{mirrored} space $\widehat{M}$ equipped with a \enquote{mirrored} kernel $\widehat K$. This construction will depend on the value of the parameter $R$.

Here are the details underpinning the mirroring construction.

\begin{construction}
\label{def:collapse}
 Let $X \subset M$ be a closed subset with compact boundary.  The \emph{$(X,R)$-mirrored metric measure space $(\widehat M,d_{\widehat M},\mathrm{vol}_{\widehat{M}})$} and the \emph{$(X,R)$-mirrored Markov kernel $\widehat K$} on the space $\widehat M$ is defined as follows:
\begin{itemize}
    \item The space $\widehat M$ is the quotient space obtained from $M$ by collapsing the subset $X$ to a point. Denote this special point by $\mirror \in N$.  
    \begin{itemize}
        \item Let $\pi : M \to \widehat M$ denote the quotient map. It sets up a bijection $\pi : M \setminus X \to \widehat M \setminus \{\mirror\}$. Further $\pi(X) = \{\mirror\}$.
    \end{itemize}
    \item The space $\widehat M$ is equipped with the quotient metric $d_{\widehat M}$. The quotient metric induces the topology (by the assumption that the subset $X$ is closed and has a compact boundary).
    \item The space $\widehat M$ is equipped with the measure $\mathrm{vol}_{\widehat M}$ defined as follows:
    \begin{itemize}
            \item The measure $\mathrm{vol}_{\widehat M}$ on $\widehat M \setminus \{\mirror\}$ is induced by pushing forward the measure $\mathrm{vol}_M$ from $M \setminus X$ via the quotient map $\pi$.
        \item Consider the subsets $T = \delta_X^{-1}(\left(0,R\right]) \subset M$ and  $Q = \pi(T) \subset \widehat M$. Note that $Q$ is bounded and $\mirror \in \overline{Q}$.

        \item Set $\mathrm{vol}_{\widehat M}(\{\mirror\}) = K_{M\setminus X \to X}$. Note that $K_{M\setminus X \to X} = K_{T \to X}$ as $K$ is bounded with constant $R$, so that this quantity is finite.
    \end{itemize}
\item The Markov kernel $\widehat K$ on the $(X,R)$-mirrored metric measure space $\widehat M$ is defined as follows:
\begin{itemize}
    \item For any point $y \in \widehat M \setminus\{\mirror\}$ of the form  $y = \pi(z)$ for some $z \in M \setminus X$ we set
    $K_y= \pi _* K_z$.
    \item The transition probability $\widehat K_\mirror$ at the special point $\mirror \in \widehat M$ is defined as follows. In case 
     $K_{X\to M \setminus X } > 0$  we set 
    $$ \widehat K_\mirror(E)=\frac{K_{X\to F}}{K_{X \to M\setminus X}}$$    
    for each Borel set $E  \subset \widehat M \setminus \{\mirror\}$ of the form $E = \pi(F)$ for some Borel set $F \subset M \setminus X$. In addition, we set $\widehat K_\mirror(\{\mirror\})=0$.
     Alternatively, in case  $K_{X \to M \setminus X} = 0$ we simply set  $\widehat K_\mirror = \delta_\mirror$.
    \end{itemize}
    \end{itemize}
\end{construction}

To round up the construction of mirrored metric measure spaces, we verify that the measure $\mathrm{vol}_{\widehat M}$ is stationary with respect to the kernel $\widehat K$ (i.e. the Markov kernel $\widehat K$ is intrinsically stationary on $\widehat M$). In the essential case $K_{X \to M \setminus X} > 0$ we have
$$
(\widehat{K} * \mathrm{vol}_{\widehat{M}})(\{\mirror\}) =  \widehat{K}_{\{\mirror\}\to \{\mirror\}} + \widehat{K}_{{\widehat{M}}\setminus\{\mirror\}\to\{\mirror\}} = 0+K_{M\setminus X\to X} =\vol_{\widehat{M}}(\{\mirror\}).
$$
Further, for each Borel subset $E \subset {\widehat{M}} \setminus \{\mirror\}$ we have
\begin{align*}
(\widehat{K} * \vol_{\widehat{M}})(E) &= \widehat{K}_{\{\mirror\}\to E} + \widehat{K}_{{\widehat{M}}\setminus \{\mirror\} \to E} = \vol_{\widehat{M}}(\{\mirror\}) \cdot \widehat{K}_\mirror(E) + \widehat{K}_{{\widehat{M}}\setminus \{\mirror\} \to E}
\\ &= K_{M\setminus X\to X} \cdot \frac{K_{E\to X}}{K_{M\setminus X\to X}} + K_{M \setminus X \to E } = K_{E \to X } + K_{E \to M \setminus X } \\
&= \vol_{\widehat{M}}(E).
\end{align*} 
In the last equation we have slightly abused the notations and allowed $E$ to denote a Borel subset of ${\widehat{M}} \setminus \{\mirror\}$ as well as its preimage in $M \setminus X$. We have also used in the construction the fact that $K$ is intrinsically reversible to make sure $\vol_{\widehat{M}}(\{\mirror\}) < \infty$. 

In the remaining case 
$K_{X \to M \setminus X} = 0$ the verification of the stationarity of $\vol_{\widehat M}$ with respect to the kernel $\widehat{K}$ is immediate.

\begin{remark}
\label{remark:empty set}
The mirroring construction in Example \ref{def:collapse} can be performed in the special case where $X$ is the empty subset. In that case, the resulting metric measure space $\widehat M$ and Markov kernel $\widehat K$ are identical to the original ones $M$ and $K$.
\end{remark}

\section{No stationary geometric core}
\label{sec:no core}

We define a notion of a geometric core property on the collection of pointed metric measure spaces. Such a property will be encoded by the Borel subset of $\MM$ where it holds. The main result of this section says that geometric core properties must be null with respect to stationary measures.

\begin{definition}
\label{def:core}
A Borel subset $A  \subset \MM$ is  a \emph{geometric core} if for every point $(M,p) \in A$ 
    there is a Borel subset $U \subset M$ with compact boundary such that
    $\typ{U} \subset \MM \setminus A$ and
    $\mathrm{vol}_M(U) = \infty$.
\end{definition}

The formula $\typ{U} \subset \MM \setminus A$ means that the geometric core property $A$ is \emph{not} satisfied on the entire subset $U$.
So, intuitively speaking, whenever a pointed metric space satisfies a geometric core property, it has to admit an infinite-volume part with \enquote{bounded boundary} where the core property fails.

\begin{example}[Bounded cores]
\label{example:bounded core}
Let $A \subset \MM$ be a Borel subset. If every point $(M,d_M,\mathrm{vol}_M;p) \in A$ satisfies $\vol_M(M) = \infty$ and the subset $\{q \in M \: : \: \typ{q} \in A\}$ has bounded diameter with respect to the metric $d_M$ (i.e. it is compact) then $A$ is a geometric core.
\end{example} 

Here is the main result of this work. It applies quite generally, but its statement is a bit technical.

\begin{theorem}[No stationary geometric core]
\label{thm:main markov}
Let $K$ be a good Markov kernel and $\nu$ a random pointed length metric measure space stationary with respect to $K$. Then $\nu(A)=0$ for any geometric core $A \subset \MM$.
\end{theorem}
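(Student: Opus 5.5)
We argue by contradiction: suppose $\nu(A)>0$. The plan has three stages. First, transfer the ``finite expected return time'' information from $\nu$ to the intrinsic volumes of individual spaces. Second, use mirroring on a single space to produce an infinite stationary measure to which Theorem \ref{thm:MT infinity} applies. Third, compare the two.

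\textbf{Stage 1 (from $\nu$ to $\vol_M$).} For a space $M$ write $A_M=\{q\in M : \typ{q}\in A\}$. By Corollary \ref{cor:0 sets}, the set $B=\{x\in A : \mathbb{E}_{x,K}(\tau_A)=\infty\}$ is $\nu$-null. Proposition \ref{prop:O retains 0} then gives $\nu(\Fatten{B})=0$. Hence for $\nu$-a.e. $(M,p)$, $\vol_M$-almost every $q\in A_M$ has finite expected return time to $A$.

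We also need a $\nu$-positive set of spaces with $\vol_M(A_M)>0$, i.e.\ $\nu(A\cap\Fatten{A})>0$. The worry is that $A$ might only meet volume-null sets of points. To rule this out, put $C=A\setminus\Fatten{A}$ and try to show $\nu(C)=0$. For this I would use stationarity $\nu=K*\nu$ together with intrinsic reversibility and stationarity of $\vol_M$ under $K$: the kernel restricted to a space carries no mass onto $\vol_M$-null sets in the reversed direction. I expect this to be a routine if slightly fiddly measure-theoretic point. Fix one such typical space $M$ with a point $p\in A$.

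\textbf{Stage 2 (mirroring).} Let $U\subset M$ be the set provided by Definition \ref{def:core}, so $\vol_M(U)=\infty$, $\partial U$ is compact, and $A_M\cap U=\emptyset$. Set $X=M\setminus \operatorname{int}U$, a closed set with compact boundary containing $A_M$. Form the $(X,R)$-mirrored space $\widehat M$ with kernel $\widehat K$ as in Construction \ref{def:collapse}. The facts we need are:
\begin{itemize}
\item $\vol_{\widehat M}$ is stationary for $\widehat K$ (verified after the construction);
\item $\vol_{\widehat M}$ is infinite;
\item $\vol_{\widehat M}(\{\mirror\})=K_{M\setminus X\to X}$, which is positive and finite (positive by centralization near $\partial U$ and the length-space assumption).
\end{itemize}
We also need $\widehat K$ to be intrinsically irreducible on $\widehat M$, at least on the component reachable from $\mirror$. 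I would derive this from centralization plus $M$ being a length space: chains of $\varepsilon$-balls connect any point of $U$ to the collar $T$ without entering $X$, and if necessary we restrict to the relevant component. Then $\Fatten{\{\mirror\}}$ is everything, and Theorem \ref{thm:MT infinity} with $\alpha=\vol_{\widehat M}$ gives
$$\mathbb{E}_{\mirror,\widehat K}(\tau_{\{\mirror\}})=\infty.$$
Unwinding the construction, this says that a walk on $M$ started from the distribution $K_{X\to\cdot}$ normalized on $M\setminus X$ has infinite expected time before returning to $X$.

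\textbf{Stage 3 (comparison).} Since $A_M\subset X$, any walk started in $A_M$ satisfies $\tau_A\ge$ (time of its first excursion into $U$). So it suffices to find a $\vol_M$-positive set of $q\in A_M$ from which the walk, with positive probability, enters $M\setminus X$ with a distribution dominating a positive multiple of the mirrored entrance law $\widehat K_\mirror$. The subsequent excursion then has infinite expected length, giving $\mathbb{E}_{q,K}(\tau_A)=\infty$ on a positive-volume subset of $A_M$, contradicting Stage 1.

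This domination step is where I expect the main difficulty. The entrance law from $\mirror$ is averaged over all of $X$, whereas we start from points of $A_M$. The plan is to cover the compact collar $T$ by finitely many $\varepsilon$-balls. Intrinsic irreducibility lets us reach each ball from a.e.\ point of $A_M$ with positive probability, and centralization (with density lower bounds $c_x$ made uniform on compacts) converts hitting a ball into a lower bound $\ge c\cdot\vol_M$ on the next step. This shows the law of the position just after entering $M\setminus X$ dominates a constant times the entrance law from $\mirror$. If that uniformity fails, the fallback is to split $T$ into pieces and argue that at least one piece already carries infinite expected excursion length.
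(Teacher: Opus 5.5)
\textbf{Genuine gap in Stage 3.} An excursion into $U$ only lengthens $\tau_A$ if it begins before the walk's first return to $A_M$. Your claim that every walk from $q\in A_M$ has $\tau_A\ge$ (time of its first excursion into $U$) is false, because the walk may re-enter $A_M$ at step $1$ without touching $U$. Intrinsic irreducibility and centralization only say that the walk from $q$ eventually reaches each $\varepsilon$-ball of the collar with positive probability. They do not say it gets there while avoiding $A_M$. For instance, if $A_M\supset B(o,10R)$, then $R$-boundedness gives $\tau_A=1$ almost surely for every $q\in B(o,9R)$. So the entrance-law domination cannot hold from a.e.\ point of $A_M$, and your plan does not identify which positive-volume subset of $A_M$ enters $U$ before $\tau_A$ in a way that charges the region of long excursions. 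The whole argument rests on this step.

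It can be repaired as follows. By reversibility, $\widehat K_\mirror$ has density $K_y(X)/K_{X\to M\setminus X}$, which is bounded on the $R$-collar. As in your fallback, some piece $L$ of diameter $<\varepsilon$ then has $\int_L \mathbb{E}_y(\text{hitting time of }X)\,\dd\vol_M(y)=\infty$, and centralization gives $\mathbb{E}_{z,K}(\tau_A)=\infty$ for $z\in L$. You then need a last-exit argument:
\begin{enumerate}
\item from a.e.\ $q\in A_M$ the walk hits $L$ with positive probability;
\item the last visit to $A_M$ before that lies in the set $A''\subset A_M$ of points from which $L$ is hit before returning to $A$;
\item $\vol_M$-stationarity (a $\vol_M$-null set is a.s.\ never visited from $\vol_M$-a.e.\ start) forces $\vol_M(A'')>0$.
\end{enumerate}
This contradicts Stage 1.

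\textbf{Stage 1 is also not routine with the tools you list.} Reversibility and $\vol_M$-stationarity only control walks started from $\vol_M$-typical points, while $\nu$-typical base points need not be $\vol_M$-distributed. A kernel with $K_x=\delta_x$ on a $\vol_M$-null set is still reversible, and point masses there are stationary. The claim $\nu(A\setminus\Fatten{A})=0$ does hold for good kernels, but you must use irreducibility. One way is to take $C^*$ of maximal $\nu$-measure among fibrewise $\vol$-null Borel sets and show that $K^{*n}_x(C^*)=1$ for $\nu$-a.e.\ $x\in C^*$ and all $n$, which contradicts irreducibility.

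\textbf{How the paper avoids both difficulties.}
\begin{itemize}
\item It replaces $A$ by its $r$-neighbourhood $A_r$. Positive volume is then automatic, and $A\subset\bigcup_r\Fatten{\{x\in A_r:\int E_r\,\dd K_x=\infty\}}$.
\item It splits $M=Q\sqcup V\sqcup X$, where $Q\supset B(p,r)$ is bounded and contained in $A_r$, $V\subset U$ avoids $A_r$, and $d(V,X)>R$.
\item It mirrors the far set $X$ and applies Theorem \ref{thm:MT infinity} with return set $Q$ rather than $\{\mirror\}$.
\end{itemize}
The coupling then gives $\tau_Q\le\tau_{A_r}+1$ pointwise, so the reach-before-return problem never arises.
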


We recall that a Markov kernel is called good if it is intrinsically stationary, intrinsically reversible, intrinsically irreducible, bounded and centralized. Its domain is some saturated class $\SAT$ of metric measure spaces implicit in the proof.

\begin{proof}[Proof of Theorem \ref{thm:main markov}]
Let $A \subset \MM$ be a geometric core. For each $r > 0$ we define
$$ A_r = \{ (M,q) \in \MM \: : \: \text{there is  $(M,p) \in A$ with $d_M(p,q)< r$} \}$$
so that $A_r \subset \MM$ is a Borel subset (in fact $A_r$ is also a geometric core; we will not use this observation directly). Let $R > 0$ be a constant such that the Markov kernel $K$ is $R$-bounded. 

Let $(M,d_M,\vol_M,p) \in A$ be an element in the geometric core. Let $U \subset M$ be the associated Borel subset as in Definition \ref{def:core}. There is some sufficiently large radius $r = r(M,p)$ so that the open ball $B_r = B_{(M,d_M)}(p,r)$ and the corresponding subsets
$$  V = \{x \in U \: : \: \typ{x} \notin  A_r \},  \;  Q=B_r \cup (U \setminus V)   \quad \text{and} \quad X = M \setminus (Q\cup V  )$$
satisfy that $d_M(V,X) > R$,  $\vol_M(Q) > 0$ and that $X$ is closed. This is possible by the assumption that the boundary of $U$ is compact and since $\vol_M(U) = \infty$. Note that the resulting closed subset $X$ has compact boundary, and $M = X \cup Q \cup V$ is a disjoint union.

Let $(\widehat M,d_{\widehat M},\vol_{\widehat M})$ be the $(X,R)$-mirrored metric measure space obtained from $M$ by collapsing the subset $X$ to a single point $\mirror \in \widehat M$, and $\widehat K$ be the $(X,R)$-mirrored Markov kernel on $\widehat M$; see Construction \ref{def:collapse}. The measure $\vol_{\widehat M}$ is stationary with respect to $\widehat K$. We continue using  $Q$ and $V$ for their respective images in the mirrored space $\widehat M$ (by a slight abuse of notation). Namely $\widehat M = Q \cup V \cup \{\mirror\}$ is a disjoint union.

We  claim that  $\vol_{\widehat M}$-almost every point $q \in Q \subset \widehat M$ satisfies
 \begin{equation} 
 \label{eq:return time inequality}
 \mathbb{E}_{q,\widehat K}(\tau_Q) \le \mathbb{E}_{\typ{q},K}(\tau_{A_r}) + 1.
 \end{equation}
 On the right-hand side  of the inequality, the random walk is performed simply with respect to $K$ regarded as a Markov kernel on its domain in the \enquote{usual} sense.
 On the left-hand side, the random walk is performed with respect to the Markov kernel $\widehat K$ on the individual space $\widehat M$.  The idea is to track these two random walks simultaneously. 
 
 To establish the claim, consider a generic sample path $\omega = (p_n)$ where $p_0 = q \in Q$ and each point $p_n$ lies in the space $\widehat M$. The $R$-boundedness of the kernel $K$ guarantees that, almost surely, if $p_n \in V$ for some $n$ then $p_{n+1} \in V \cup Q$. Observe that $$\typ{Q} \subset A_r \quad \text{and} \quad \typ{V} \subset \MM \setminus A_r.$$
 Hence, there are three mutually distinct possibilities for the sample path $\omega$:
 \begin{enumerate}
     \item $p_n \in V$ for all $n \in \NN$. In this case $\tau_Q(\omega) = \tau_{A_r}(\omega) = \infty$.
     \item $p_n \in V$ for all $ n \in \{1,\ldots,n_0-1\}$ and $p_{n_0} \in Q$ for some $n_0\in\NN $. Then $n_0=\tau_Q(\omega) =  \tau_{A_r}(\omega)$.
     \item $p_1 = \mirror \in \widehat M$. 
     Then $\tau_Q(\omega) = 2$ while $\tau_{A_r}(\omega) \ge 1$.
 \end{enumerate}
 Integrating with respect to the measure on the space of sample paths establishes the desired inequality in the claim. 
 Next, Theorem \ref{thm:MT infinity} applied with respect to the kernel $\widehat K$ on the mirrored metric measure space $\widehat M$, combined with the return-time inequality from the above claim, gives
 $$\int_Q \mathbb{E}_{\typ{q},K}(\tau_{A_r}) \; \mathrm{d} \vol_M (q)\geq
 \int_Q\mathbb{E}_{q,\widehat K}(\tau_Q) \; \mathrm{d} \vol_{\widehat M} (q) - 1 =
 \infty-1 = \infty.$$
 
 To ease our notations set $E_r(x) = \mathbb E_{x,K}(\tau_{A_r})$ througout the rest of the proof. Let $\varepsilon > 0$ be a constant such that the Markov kernel $K$ is $\varepsilon$-centralized. The bounded set $Q$ can be covered by finitely many sets of diameter bounded by $\varepsilon$ (i.e. the intersections of balls with $Q$). Hence there is Borel subset $L \subset Q$ with $\mathrm{diam}_{d_M}(L) \le \varepsilon$ satisfying
$$ \int_{L} E_r(\typ{q}) \; \mathrm{d} \vol_M (q) = \infty.$$
The fact that the Markov kernel $K$ is $\varepsilon$-centralized implies that 
$$ \int_{\MM} E_r(z) \; \mathrm{d}K_{q}(z)=\infty$$ for $\vol_M$-almost all points $q \in L$. Certainly $\vol_M(L) > 0$. 
In particular, it is the case that
$$ \vol_M(\{q \in M \: : \: \typ{q} \in A_r \quad \text{and} \quad \int_{\MM} E_r(z) \; \dd K_q(z) = \infty\}) > 0.$$

Altogether, we have established the following inclusion
$$ A \subset \bigcup_{r \in \NN} A^*_r \quad \text{where} \quad A^*_r =\Fatten{\left\{x \in A_r \: : \: \int_{\MM} E_r(y) \; \dd K_x(y)= \infty\right\}}$$
and $\Fatten{\cdot}$ is the \enquote{measurable saturation} operation (see Notation \ref{def:fatten}). To conclude, note that the subset
 $\{x\in A_r :   E_r(x)= \infty  \}$
 is $\nu$-null for each $r $  by Corollary \ref{cor:0 sets}. Hence also the subset $A_r^*$
 is $\nu$-null for each $r$ by stationarity and by Proposition \ref{prop:O retains 0}. We conclude that $\nu(A) = 0$ as required.
 \end{proof}

The following result is to be compared with \cite[Proposition 23] {curien2017random} in the case of stationary random graphs  and with the main results of \cite{HK} in the case of stationary random subgroups of discrete groups.

\begin{cor}[No stationary bounded core]
\label{cor:no bounded cores}
Let $K$ be a good Markov kernel and $\nu$ be a random pointed length metric measure  space stationary with respect to $K$. Let $A \subset \MM$ be a Borel subset. If every point $(M,d_M,\mathrm{vol}_M;p) \in A$ has $\vol_M(M) = \infty$ and $$\mathrm{diam}_{d_M}(\{q \in M \: : \: (M,q) \in A\}) < \infty$$ then 
$A$ is $\nu$-null.
\end{cor}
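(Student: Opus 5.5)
The plan is to deduce this directly from Theorem \ref{thm:main markov}, by checking that a set $A$ as in the statement is a geometric core in the sense of Definition \ref{def:core}. This is essentially the content of Example \ref{example:bounded core}, and the proof amounts to making that example precise.

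Fix a point $(M,d_M,\vol_M;p)\in A$ and write $S=\{q\in M : (M,q)\in A\}$. By hypothesis $S$ has finite diameter, and it is nonempty because it contains $p$. Choose a radius $\rho$ with $S\subset B=\overline{B}_{(M,d_M)}(p,\rho)$, and set $U=M\setminus B$. I will check the three requirements of Definition \ref{def:core}.
\begin{itemize}
\item $U$ is Borel, since it is open.
\item Its boundary is contained in the sphere of radius $\rho$ around $p$. That sphere is closed and bounded, hence compact because $M$ is proper.
\item Every $q\in U$ lies outside $S$, so $(M,q)\notin A$, which means $\typ{U}\subset\MM\setminus A$.
\item Since $\vol_M$ is Radon and $M$ is proper, the closed ball $B$ is compact and so $\vol_M(B)<\infty$. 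Together with $\vol_M(M)=\infty$ this gives $\vol_M(U)=\infty$.
\end{itemize}
Hence $A$ is a geometric core, and Theorem \ref{thm:main markov} yields $\nu(A)=0$.

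The only care needed is measurability. The statement already assumes $A$ is Borel, which is what Definition \ref{def:core} requires. The choice of $U$ is made pointwise, and Definition \ref{def:core} does not ask for it to depend measurably on the point, so no further work is needed. I therefore expect no real obstacle. The one substantive point is that finite diameter plus properness gives compactness, and Radon-ness then gives finite volume of the ball, which is what makes the complement have infinite volume.
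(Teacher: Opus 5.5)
Your proposal is correct and follows the paper's proof: the paper also notes that such an $A$ is a geometric core (Example \ref{example:bounded core}) and then applies Theorem \ref{thm:main markov}. The paper just calls this verification clear, and your choice of $U = M \setminus \overline{B}_{(M,d_M)}(p,\rho)$ spells it out: the boundary lies in a compact sphere, $U$ misses $A$, and $U$ has infinite volume because the closed ball is compact and $\vol_M$ is Radon.
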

\begin{proof}
A Borel subset $A$ with the stated properties is clearly a geometric core (see Example \ref{example:bounded core}). The fact that $\nu(A) = 0$ follows as a special case of Theorem \ref{thm:main markov}.
\end{proof}

\begin{remark}
The  bounded core result (Corollary \ref{cor:no bounded cores}) is a special case of the geometric core result (Theorem \ref{thm:main markov}). However, in the proof of the former we do not need to use mirrored spaces (or rather, the mirroring construction is applied with respect to the empty set $X = \emptyset$, see Remark \ref{remark:empty set}). In particular, the intrinsic reversibility assumption is not needed in Corollary \ref{cor:no bounded cores}.
\end{remark}

\section{Space of ends and geometric cores}
\label{sec:cores}

The notion of a geometric core was introduced in \S\ref{sec:no core} above; see Definition \ref{def:core}. 
In this section we construct several explicit and useful such cores. Most of them will be related to understanding spaces of ends.

\subsection{The space of ends}
\label{sub:the space of ends}
We recall the classical notion of the space of ends. Roughly speaking, it is an invariant measuring all the different ways to go to infinity in a topological space. It was introduced by
Freudenthal in \cite{freudenthal1931enden}. It works well for the following class of spaces.

\begin{definition}
\label{def:Freudenthal}
A Hausdorff topological space $M$ is called \emph{Freudenthal} if $M$ is locally compact, locally connected, connected and separable. 
\end{definition}

Let $M$ be a Freudenthal topological space.   For each compact subset $Q \subset M$ let $\mathcal{U}(Q)$ denote the collection of unbounded (i.e. having non-compact closure) connected components of the complement $M \setminus Q$.  
The  sets $\mathcal U(Q)$ form an inverse system, in the sense that whenever a pair of compact sets satisfies $Q_1 \subset Q_2$ inclusion of connected components determines a map $\mathcal U(Q_2) \to \mathcal U(Q_1)$. 
Note that given a compact subset $Q \subset M$ the collection $ \mathcal U(Q)$ is finite \cite[Lemma 1.1]{raymond1960end}.

\begin{definition}
\label{def:space of ends}
The \emph{space of ends} $\Ends{M}$ of the Freudenthal space $M$ is the inverse limit of the inverse system $\mathcal U(Q)$ where $Q$ ranges over all compact subsets of $M$. An \emph{end neighborhood} of a given end $\zeta \in \Ends{M}$ is an element $V \in \mathcal{U}(Q)$ corresponding to $\zeta$ for some compact subset $Q \subset M$.
\end{definition}

Freudenthal proved that the space of ends $\Ends{M}$ of the Freudenthal space $M$ is a compact, totally disconnected, separable Hausdorff topological space with respect to the topology generated by end neighborhoods \cite{freudenthal1931enden}. In fact, it is the maximal compactification of the space $M$ with those properties\footnote{Further literature on the space of ends is \cite{
raymond1960end,
siebenmann1965obstruction,
peschke1990theory,
hughes1996ends,
guilbault2021endsshapesboundariesmanifold,
axon2025end,
bass2025ends}.}.

Assume further that $M$ is a Freudenthal metric measure space, equipped with the measure $\mathrm{vol}_M$. Let $\EndsInf{M}$ denote the subset of all \emph{infinite-volume ends}, namely ends $\zeta\in \Ends{M}$ all of whose end neighborhoods have infinite $\mathrm{vol}_M$-measure.
Let $\EndsFin{M}$ denote the subset of \emph{finite-volume ends}, namely $\EndsFin{M} = \Ends{M}\setminus \EndsInf{M}$. 
For example, a cusp of a hyperbolic manifold is a finite-volume end, and a funnel is an infinite-volume end. Ends of graphs always have infinite volume. 

Lastly,  consider the following observation, which will be useful below to verify the defining properties of cores. Its proof is immediate from the definitions.

\begin{lemma}
\label{lemma:being properly bounded}
Let $M$ be a Freudenthal space and $Q \subset M$ a compact subset.  Let $V \in \mathcal{U}(B)$ be an end neighborhood. The subset $V$ has compact boundary.
\end{lemma}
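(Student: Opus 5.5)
The statement is a point-set topological fact, and I will prove it directly from the definitions. There is a typo in the statement: $B$ should be the compact set $Q$, so $V \in \mathcal{U}(Q)$ is a connected component of $M \setminus Q$. The plan is to show that $\partial V \subset Q$. Since $Q$ is compact and $\partial V$ is closed, it will follow that $\partial V$ is compact.

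The key step is to show that every connected component $V$ of the open set $M \setminus Q$ is itself open in $M$. This is where local connectedness, one of the Freudenthal hypotheses of Definition~\ref{def:Freudenthal}, enters. In a locally connected space, the connected components of an open subset are open. Indeed, given $x \in V$, there is a connected open neighbourhood $W$ of $x$ with $W \subset M \setminus Q$. The union $W \cup V$ is connected, so by maximality of the component we get $W \subset V$.

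With $V$ open, the boundary is $\partial V = \overline{V} \setminus V$. Take a point $y \in \partial V$ and suppose $y \notin Q$. Then $y$ lies in some component $V'$ of $M \setminus Q$, and $V'$ is open by the previous step. We cannot have $V' = V$, since $y \notin V$. Hence $V'$ is disjoint from $V$, because distinct components are disjoint. On the other hand $V'$ is an open neighbourhood of $y \in \overline{V}$, so it must meet $V$. This is a contradiction, and therefore $\partial V \subset Q$.

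Finally, $\partial V$ is closed in $M$, and a closed subset of the compact set $Q$ is compact (using that $M$ is Hausdorff). So $\partial V$ is compact. Nothing here uses unboundedness of $V$, so the conclusion holds for every component of $M \setminus Q$, not only for end neighbourhoods. The only point needing care is that components of open sets are open, and local connectedness handles it at once. I do not anticipate any real obstacle.
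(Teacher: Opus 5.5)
Your proof is correct, and it is the same argument the paper has in mind when it calls the lemma \enquote{immediate from the definitions}: by local connectedness the components of the open set $M\setminus Q$ are open, so $\partial V \subset Q$, and $\partial V$ is then a closed subset of a compact set. One small correction: Hausdorffness is needed earlier than you say, to know that the compact set $Q$ is closed so that $M\setminus Q$ is open. It is not needed in the last step, since a closed subset of a compact set is compact in any topological space.
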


\subsection{Four geometric cores}

We define several geometric cores, starting with very general ones, and moving on to more specialized. First, there is a pair of cores dealing with the topology of infinite-volume and of finite-volume ends. Let
$$ A_r^\mathrm{isolated} = \left\{(M;p) \in \MM \: : \: \begin{tabular}{c} $|\mathcal{U}_\infty(B)| \ge 3$ for $B=B_M(p,r)$ and some \\ $\zeta \in \mathcal{U}_\infty(B)$ contains a single infinite-volume end \end{tabular} \right\}.$$
For each $v > 0$ let
$$ A_{r,v}^\mathrm{finite} = \left\{(M;p) \in \MM \: : \: \begin{tabular}{c} the ball $B=B_M(p,r)$ has $\vol_M(B)\ge v$, \\
some $\xi \in \mathcal{U}_{<\infty}(B)$ has $\vol_M(\xi) < v$ and\\
 some $\zeta \in \mathcal{U}_{\infty}(B)$ contains no finite-volume ends
 \end{tabular} \right\}.$$
Here,  as usual, the space $(M;p)$ is implicitly equipped with the metric $d_M$ and the measure $\vol_M$. 

The next core applies only to Riemannian manifolds, and is defined on the corresponding saturated class  $\Riemanns \subset \MM$. For each $R > 0 $ let
$$ A_{r,R}^\mathrm{injrad} = \left\{(M;p) \in \Riemanns \: : \: \begin{tabular}{c} a point in $B = B_M(p,r)$ has injectivity radius  $\le R$ and \\  
some $\zeta \in \mathcal{U}_\infty(B)$ has injectivity radius $\ge R$ at all points
 \end{tabular} \right\}.$$
Here, a Riemannian manifold is understood to be equipped with its Riemannian metric and Riemannian volume measure.

Finally, we define a core which is applicable only to surfaces, for it depends on the notion of genus. Recall that $\Surfaces$ denotes the saturated class in  $\MM$ consisting of Riemann surfaces. Let
$$ A_r^\mathrm{genus} = \left\{(M;p) \in \Surfaces \: : \: \begin{tabular}{c} $B = B_M(p,r)$ has positive genus and  \\
some $\zeta \in \mathcal{U}_\infty(B)$ has zero genus
 \end{tabular} \right\}.$$
Once again, each surface is naturally equipped with its Riemann metric and  area measure. We understand the \emph{genus} of a subset $S$ of a surface to be the supremum of the genus over all compact subsurfaces with boundary contained in $S$.


\begin{prop}
\label{prop:cores}
The properties defined by the Borel subsets $A_r^\mathrm{isolated}$, 
 $ A_r^\mathrm{genus}$, $A_{r,v}^\mathrm{finite}$ for all $v > 0$,  and 
$A_{r,R}^\mathrm{injrad}$ are geometric cores for all values $r,R > 0$.
\end{prop}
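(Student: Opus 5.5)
For each of the four families the plan is the same. Given a point $(M;p)$ in the set, with $B = B_M(p,r)$, we take as the witness $U$ of Definition \ref{def:core} the distinguished component $\zeta \in \mathcal{U}_\infty(B)$ (for $A^{\mathrm{finite}}_{r,v}$ a suitable subset of it). Three things must be checked. First, $U$ has compact boundary: by Lemma \ref{lemma:being properly bounded}, since $\overline{B}$ is compact ($M$ is proper) and $U$ is a complementary component, $\partial U \subset \partial B$. Second, $\vol_M(U) = \infty$: this holds because $\zeta \in \mathcal{U}_\infty(B)$ is an infinite-volume component. Third, and this is the real work, for every $q \in U$ the pointed space $(M;q)$ fails the defining property. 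We would also check Borelness; we would treat it as routine, since each condition is expressed through balls, volumes and complementary components, which vary measurably in the topology of \cite{Bowen15}.

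For $A_r^{\mathrm{injrad}}$ and $A_r^{\mathrm{genus}}$ the third check is direct. If $q \in U$ satisfied the property, some point of $B_M(q,r)$ would have injectivity radius $\le R$ (respectively $B_M(q,r)$ would carry positive genus). However, the witness for $q$ can be anywhere near $q$, not only inside $U$, so $U$ must be shrunk to points far from $B$. Concretely, we replace $U$ by $U' = \{x \in U : d_M(x, B) > r\}$, or more cleanly by the union of unbounded components of $U \setminus \overline{B_M(p,2r)}$ that are contained in $\zeta$. Then $B_M(q,r) \subset \zeta$ for $q \in U'$, so the injectivity radius there is $\ge R$ and the genus is zero, and $(M;q)$ fails the property. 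The set $U'$ still has compact boundary and infinite volume, since removing a bounded set from an infinite-volume set leaves infinite volume. (We take the injectivity condition in $A^{\mathrm{injrad}}$ to mean strictly $<R$ versus $\ge R$, or adjust at the boundary case; we read "$\le R$" as meant to be strict.)

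For $A_r^{\mathrm{isolated}}$ we use the same shrinking, with $\zeta$ the component containing a single infinite-volume end. For $q$ deep inside $\zeta$, every infinite-volume component of $M\setminus B_M(q,r)$ other than the one containing $B$ lies inside $\zeta$. Since there are at least $3$ infinite-volume components at $p$, the component at $q$ containing $B$ carries at least two infinite-volume ends. Therefore any $q$-component containing at most one infinite-volume end lies in $\zeta$. The subtle point is that at $q$ the ball might separate $\zeta$'s unique end region into several components. To show that $q$ fails the property, we would simply choose $U$ differently. We let $U$ be an end neighborhood of the unique infinite-volume end of $\zeta$, taken far away, so that for $q\in U$ the ball $B_M(q,r)$ leaves at most $2$ infinite-volume components: the one containing $B$, with the other ends, and at most one inside $\zeta$. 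This requires that the finite-volume pieces of $\zeta$ do not create infinite-volume components. That follows because an infinite-volume unbounded component must contain an infinite-volume end (by compactness of the end space and a König-type argument). This is the step I expect to be the main obstacle: the careful accounting of components of $M\setminus B_M(q,r)$ for $q$ far out in $\zeta$.

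For $A^{\mathrm{finite}}_{r,v}$ we let $U$ be the shrunk part of the $\zeta$ containing no finite-volume ends, and show that for $q \in U$ no component $\xi$ of $M\setminus B_M(q,r)$ is a finite-volume unbounded component of volume $<v$. Such a $\xi$ would either contain $B$, and hence have volume $\ge v$, or lie in $\zeta$. In the latter case, being unbounded with finite volume, it would contain a finite-volume end, by the same compactness argument, contradicting the choice of $\zeta$.
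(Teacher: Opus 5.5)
Your proposal is correct and takes essentially the paper's route: the paper takes $U$ to be the relevant infinite-volume component of $M\setminus B_M(p,2r)$, which lies in $\zeta$ and amounts to your shrinking so that $B_M(q,r)\subset\zeta$ for $q\in U$, and then simply asserts that the property fails on $U$. Your version adds details the paper leaves implicit: connectedness of balls and of $M\setminus\zeta$ in a length space, every infinite-volume unbounded component containing an infinite-volume end, and the end-counting for $A_r^{\mathrm{isolated}}$ (your re-choice of $U$ there is harmless but unnecessary, since the count works for any $q$ with $B_M(q,r)\subset\zeta$); you also rightly flag that one of the inequalities $\le R$ / $\ge R$ in $A_{r,R}^{\mathrm{injrad}}$ must be strict for the argument to go through.
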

\begin{proof}
Fix some $r > 0$. Let $(M,d_M,\vol_M;p)$ be a  metric measure space.  Assume that $M$ belongs to the Borel subset $A_r^\mathrm{isolated}$, $A_{r,v}^\mathrm{finite}$, $A_{r,R}^\mathrm{injrad}$ or $A_r^\mathrm{genus}$ respectively. In all cases, consider the ball
 $B = B_{M,d_M}(p,2r)$. 
Let $U \in \mathcal{U}_\infty(B)$ be the relevant infinite-volume connected 
component of $M \setminus B$. Namely:
\begin{itemize}
    \item $A_r^\mathrm{isolated}$: take  an isolated infinite-volume connected component.
    \item $A_{r,v}^\mathrm{finite}$: take an infinite-volume connected component with no finite-volume ends.
    \item $A_{r,R}^\mathrm{injrad}$: take  an infinite-volume connected component whose injectivity radius is $\ge R$ at all points.
    \item $A_r^\mathrm{genus}$: take an infinite-volume connected component with zero genus.
\end{itemize}
Note that $\left[U\right] \subset \MM \setminus A_r$, namely the property $A_r$ never holds on $U$, where $A_r$ is  one of the four properties $A_r^\mathrm{isolated}$, $A_{r,v}^\mathrm{finite}$, $A_{r,R}^\mathrm{injrad}$ or $A_r^\mathrm{genus}$ respectively. Further $\vol_M(U)=\infty$. The subset  $U$ has compact boundary by Lemma \ref{lemma:being properly bounded}.
This concludes the proof.
\end{proof}


\begin{prop}
\label{prop:on increasing r}
Fix some $r > 0$. Let $(M,d_M,\vol_M;p)$ be a metric measure space satisfying the property
$A_r^\mathrm{isolated}$, 
 $ A_r^\mathrm{genus}$, $A_{r,v}^\mathrm{finite}$  or 
$A_{r,R}^\mathrm{injrad}$
for some $R,v > 0$.
If $q \in M$ is a point with $d_M(p,q) \le \delta$ then $(M;q)$ satisfies the same property with $r$ replaced by $r+\delta$. In particular, if $\vol_M >0 $ then the subset of points in $M$ satisfying the latter property has positive $\vol_M$-measure for all $\delta > 0$ sufficiently large.
\end{prop}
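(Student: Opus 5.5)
The argument rests on one inclusion. Write $B = B_M(p,r)$ and $B' = B_M(q,r+\delta)$. The triangle inequality gives $B \subset B'$, because $d_M(q,x) \le d_M(q,p)+d_M(p,x) < \delta + r$. So for each of the four properties, I will check that the two clauses defining it (a condition on the ball and a condition on some unbounded component of the complement) survive when $B$ is enlarged to $B'$.

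\textbf{The ball clause.} This part is monotone in the ball. For $A_{r,v}^\mathrm{finite}$ we have $\vol_M(B') \ge \vol_M(B) \ge v$. For $A_{r,R}^\mathrm{injrad}$, the point of $B$ with injectivity radius $\le R$ also lies in $B'$. For $A_r^\mathrm{genus}$, a compact subsurface of positive genus inside $B$ also lies in $B'$.

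\textbf{The component clause.} Take a component $\zeta \in \mathcal{U}(B)$ witnessing the property. Since $B' \setminus B$ is bounded, $\zeta \setminus B'$ still has unbounded components, and each of them lies in some element $\zeta' \in \mathcal{U}(B')$ with $\zeta' \subset \zeta$.
\begin{itemize}
\item For $A_{r,R}^\mathrm{injrad}$ and $A_r^\mathrm{genus}$, the conditions ``injectivity radius $\ge R$ at all points'' and ``zero genus'' pass to subsets. So any $\zeta'$ chosen this way works.
\item For $A_{r,v}^\mathrm{finite}$, the ends of $\zeta'$ form a subset of the ends of $\zeta$. Choosing $\zeta'$ to contain one of the infinite-volume ends of $\zeta$ makes $\zeta'$ infinite-volume with no finite-volume ends. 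For the finite-volume component $\xi$ of volume $< v$, pass to an unbounded component $\xi'$ of $\xi \setminus B'$. It has $\vol_M(\xi') \le \vol_M(\xi) < v$ and is still in $\mathcal{U}_{<\infty}(B')$.
\item For $A_r^\mathrm{isolated}$, let $\zeta$ contain a single infinite-volume end $e$. Take $\zeta'$ to be the component containing $e$; it contains exactly one infinite-volume end. For the count, the map $\mathcal{U}_\infty(B') \to \mathcal{U}_\infty(B)$ given by inclusion of components is surjective, since every infinite-volume component of $M\setminus B$ contains an infinite-volume end. Hence $|\mathcal{U}_\infty(B')| \ge |\mathcal{U}_\infty(B)| \ge 3$.
\end{itemize}

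\textbf{Main obstacle.} The delicate step is the $A_r^\mathrm{isolated}$ and $A_{r,v}^\mathrm{finite}$ cases, where I need the following fact. An infinite-volume unbounded component, minus a bounded set, still has an infinite-volume unbounded component, and this component carries an infinite-volume end. I plan to prove it by compactness of $\Ends{M}$. Suppose every end of $\zeta$ had a finite-volume neighborhood. Finitely many such neighborhoods, together with a compact set, would then cover $\zeta$, by the standard fact that ends compactify $M$. This would give $\vol_M(\zeta) < \infty$, because $\vol_M$ is Radon and so finite on compact sets. That contradicts $\vol_M(\zeta) = \infty$.

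\textbf{The ``in particular'' clause.} Every $q \in B_M(p,\delta)$ satisfies the property with radius $r+\delta$. Since $\vol_M$ is nonzero, $\vol_M(B_M(p,\delta)) > 0$ once $\delta$ is large enough, because these balls exhaust $M$.
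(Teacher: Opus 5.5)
Your proof follows the paper's route. Both use the triangle-inequality inclusion $B_M(p,r)\subset B_M(q,r+\delta)$ and the nesting of unbounded complementary components, then check the four definitions one by one; the paper leaves that check to the reader, and you carry it out.

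There is one slip in your check, in the cases $A_{r,R}^\mathrm{injrad}$ and $A_r^\mathrm{genus}$. The witnessing component must lie in $\mathcal{U}_\infty(B')$, so it is not true that \emph{any} unbounded component $\zeta'\subset\zeta$ of $M\setminus B'$ works. For example, in the genus case $\zeta\setminus B'$ can split off a finite-volume cusp: it inherits zero genus but is not an infinite-volume component. The fix is to choose $\zeta'$ of infinite volume. That is exactly the fact proved in your ``main obstacle'' paragraph, so you need to apply it to these two cores as well, not only to $A_r^\mathrm{isolated}$ and $A_{r,v}^\mathrm{finite}$. With that change the argument is complete.
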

\begin{proof}
Consider a point $ q \in M$ and assume that $d_M(p,q) \le \delta$. The two balls $B_1 = B_M(p,r)$ and $B_2 = B_M(q,r+\delta)$ satisfy $B_1 \subset B_2$ by the triangle inequality. Further, for any component $U_2 \in \mathcal{U}(B_2)$ there is a component $U_1 \in \mathcal{U}(B_1)$ with $U_2 \subset U_1$. The statement follows from these remarks, upon carefully examining the definitions of the cores $A_r^\mathrm{isolated}$, 
$ A_r^\mathrm{genus}$, $A_{r,v}^\mathrm{finite}$, and 
$A_{r,R}^\mathrm{injrad}$.
\end{proof}

\section{Proof of the main results}

We complete the proof of the main classification results stated in the introduction.

\subsection{Stationary random Freudenthal spaces}
Let $K$ be a good Markov kernel (i.e. $K$ is intrinsically stationary, intrinsically reversible, intrinsically irreducible, centralized and $R$-bounded). 
Let $\nu$ be a random metric measure space. Assume that $\nu$ is stationary with respect to $K$ and that $\nu$-almost every space is a Freudenthal length space.

\begin{proof}[Proof of Theorem \ref{intro:space of ends general}]
The first part of Theorem \ref{intro:space of ends general} says that  $\nu$-almost every space $M$ either has $|\EndsInf{M}| \le 2$ or $\EndsInf{M}$ is the Cantor space.
By Brouwer's well-known characterization of the Cantor space (\cite[Theorem 7.4]{kechris2012classical}), this is equivalent to saying that $\nu$-almost surely,  if $|\EndsInf{M}| \ge 3$ then the totally-disconnected space $\EndsInf{M}$ is perfect (i.e. has no isolated points).

Consider the geometric core  $A_r^\mathrm{isolated}$ defined for any $r > 0$; see Proposition \ref{prop:cores}. Topologically speaking, the desired statement is equivalent to showing that
$$ \nu(\{(M;p) \in \MM \: : \: M \cap A_r^\mathrm{isolated} \neq \emptyset \}) = 0$$
for all $r$ sufficiently large.
Proposition \ref{prop:on increasing r}
implies that 
$$ \{(M;p) \in \MM \: : \: M \cap A_r^\mathrm{isolated} \neq \emptyset \} \subset \bigcup_{\delta \ge 0}  \{(M;p) \in \MM \: : \: \vol_M(M \cap A_{r+\delta}^\mathrm{isolated}) > 0 \}. $$
Hence it 
suffices to show that
$ \nu(\Fatten{A_r^\mathrm{isolated}}) = 0$
for all $r$ sufficiently large (for the operation $\Fatten{\cdot}$  see Notation \ref{def:fatten}).
This latter statement follows by combining the \enquote{no geometric core principle} (Theorem \ref{thm:main markov}) and Proposition \ref{prop:O retains 0}.

The second part of Theorem \ref{intro:space of ends general} says that for  $\nu$-almost every space $M$, the condition $|\EndsFin{M}| \neq \emptyset$ implies $\EndsInf{M} \subset \overline{\EndsFin{M}}$. Once again, we need to establish that the geometric core $A_{r,v}^\mathrm{finite}$ satisfies
$$\nu(\{(M,p) \in \MM \: : \: M \cap  A_{r,v}^\mathrm{finite} \neq \emptyset\}) = 0$$ for all $v > 0$ and all $r$ sufficiently large. We may conclude in the same way as in the first part.
\end{proof}

In the process of the above proof we may implicitly assume that $\nu$-almost every space $M$ satisfies $\vol_M(M) = \infty$, for otherwise there is nothing to prove.

\subsection{Examples of stationary random spaces}
\label{subsec:examples}
We now derive Corollary \ref{intro:space of ends many examples}, which deals with various concrete families of spaces, from our more general Theorem \ref{intro:space of ends general}. This amounts to verifying that the involved spaces and kernels enjoy all the properties required in the statement of Theorem \ref{intro:space of ends general}.

\begin{itemize}
\item
Riemannian manifolds are Freudenthal (i.e. connected, locally connected, separable and locally compact)  spaces with the Riemannian metric.
\item
Connected locally-finite graphs are certainly Freudenthal spaces.
\item
Consider the case where $G$ is a connected unimodular Lie group. The group $G$ admits a left-invariant, proper geodesic metric $d_G$ and a left-invariant Haar measure $m_G$. The \emph{Gromov--Hausdorff space} $\mathrm{GH}(G)$ consists of all quotients $\Gamma \backslash G$ where $\Gamma \le G$ is a discrete subgroup. Each such quotient, equipped with its natural quotient metric and measure, is a Freudenthal  space.
\end{itemize}

In the above three cases of stationary random Riemannian manifolds, discrete stationary random subgroups of Lie groups and stationary random graphs, the fact that the Markov kernel in question is good was explained in Example \ref{example:Markov properties} above. Hence Corollary \ref{intro:space of ends many examples} follows directly in cases (1), (2) and (4) there.


It remains to consider the case of Schreier graphs. Let $\Gamma$ be a discrete group with a finite generating set $\Sigma$. Let $G = \mathrm{Cay}(\Gamma,\Sigma)$ be the corresponding Cayley graph. The \emph{Schreier graph} corresponding to a given subgroup $\Lambda \le \Gamma$ is
$$ \mathrm{Sch}(\Gamma,\Lambda;\Sigma) = \Lambda \backslash \mathrm{Cay}(\Gamma,\Sigma).$$

To be able to encode edge-labeled and oriented Schreier graphs as metric measure spaces, we consider the following auxiliary (and somewhat ad-hoc) construction.

First, assume that the generating set $\Sigma$ satisfies $e \notin \Sigma$ and can be written as  $\Sigma = \Sigma_1 \amalg \Sigma_2$ where $\Sigma_1 \cap \Sigma_1^{-1} = \emptyset$ and $\sigma^2=e$ for all $\sigma \in \Sigma_2$. Fix an arbitrary map  $l : \Sigma \to \left(0,\frac{1}{2}\right)$ such that the real numbers $l(\Sigma)$ are pairwise distinct.
\begin{definition}

Given a subgroup  $\Delta \le \Gamma$ we construct the \emph{metric Schreier graph} $G_{\Gamma,\Delta;\Sigma}^l \in \MM$ as follows:
\begin{itemize}
    \item The vertex set of $G_{\Gamma,\Delta;\Sigma}^l$ is $ \Delta \backslash \Gamma$.
    \item The edge set of $G_{\Gamma,\Delta;\Sigma}^l$ is  obtained as follows:
    \begin{itemize}
        \item For each pair $(\Delta\gamma,\sigma)$ with $\Delta\gamma \in \Delta\backslash \Gamma$ and $\sigma \in \Sigma_1$ there is an edge of length $1$ from the  vertex $\Delta \gamma$ to the vertex $\Delta \gamma \sigma$. Attach a stub (i.e. a geodesic segment of length $1$) to this edge at the point of distance $l(\sigma)$ from the first vertex.
        \item  For each pair $(\{\Delta\gamma,\Delta\gamma\sigma\},\sigma)$ with $\Delta \gamma \in \Delta \backslash \Gamma$ and $\sigma \in \Sigma_2$ there is a single edge of length $l(\sigma)$ between the two vertices  $\Delta \gamma$ and $\Delta \gamma \sigma$.
    \end{itemize}
    \item Equip $G_{\Gamma,\Delta;\Sigma}^l$ with the counting measure on the vertices $\Delta \backslash \Gamma$ (the metric Schreier graph is regular, except at the stubs).
\end{itemize}
\end{definition}

The geometry of the metric Schreier graph $G_{\Gamma,\Delta;\Sigma}^l$  completely reconstructs its Schreier graph structure, i.e. the edge labels with orientations. Let $\Graphs^l_{\Gamma,\Sigma}$ denote the resulting saturated class  (essentially, all possible  Schreier graphs $\mathrm{Sch}(\Gamma,\Delta;\Sigma)$ with a geometric encoding). 

\begin{remark}
An alternative and probably more direct and natural method to put Schreier graphs into the framework of metric measure spaces would be to allow for edge markings. In the context of graphs this leads to the notion of random networks studied in \cite{aldous2007processes}. However, this approach would have required us to discuss \enquote{metric networks} (i.e. metric spaces with markings). For the sake of being concise, we have prompted for the ad-hoc approach of encoding the edge labels by means of the edge lengths.
\end{remark}

Let $\mu$ be a finitely supported symmetric generating probability measure on $\Gamma$. This allows us to define a suitable Markov kernel $K_\mu$ with domain $\Graphs^l_{\Gamma,\Sigma}$. Namely, given a vertex $v=\Delta \gamma$ of the metric Schreier graph $G=G^l_{\Gamma,\Delta;\Sigma}\in\Graphs^l_{\Gamma,\Sigma}$ we set $(K_\mu)_{(G,v)}(\Delta\gamma \delta) = \mu(\delta)$ for all $\delta \in \Gamma$. 

For technical reasons, we need to formally define the Markov kernel $K_\mu$ at all points of the graph $G$. This can be done by sending interior points of edges (or of stubs attached to edges) to a random vertex endpoint of that edge. The resulting two-step Markov kernel $\frac{K_\mu + K_\mu^{*2}}{2}$ is good.

We may now deduce the remaining case (3) of Corollary \ref{intro:space of ends many examples} from Theorem \ref{intro:space of ends general}.

\subsection{Stationary random surfaces}

Let us deduce Theorem \ref{intro:space of ends surfaces} dealing with the possible homeomorphism types of stationary random surfaces by means of the \enquote{no stationary geometric core} result (Theorem \ref{thm:main markov}). The key piece of information is the following result.

\begin{theorem}[Ker\'{e}kjart\'{o} \cite{kerekjarto1923vorlesungen}]
\label{thm:kerekj}
Let $M$ be an orientable surface. The homeomorphism type of $M$ is uniquely determined by its genus, its space of ends $\Ends{M}$, the subspace $\EndsInf{M}$ of infinite-volume ends, and by the subspace  of $\EndsInf{M}$ consisting of ends which accumulate genus.
\end{theorem}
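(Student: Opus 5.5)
This is the classical Kerékjártó--Richards classification of noncompact surfaces, and I would not reprove it from scratch. The plan is to reduce it to Richards' form of the theorem. That form says: a connected orientable surface $M$ without boundary is determined up to homeomorphism by its genus $g \in \{0,1,\dots,\infty\}$ and by the topological type of the nested pair $\mathcal{E}'(M) \subset \Ends{M}$. Here $\mathcal{E}'(M)$ is the closed subset of ends accumulating genus, meaning ends all of whose end neighborhoods have positive genus. The datum $\EndsInf{M}$ in the statement is then extra information. It can only refine the classification, so uniqueness in the form stated follows immediately.

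The parts of Richards' argument I would recall, in order, are these.
\begin{enumerate}
\item Build an exhaustion $K_1 \subset K_2 \subset \cdots$ of $M$ by compact bordered subsurfaces. Arrange that every complementary component of each $K_n$ is unbounded and has a single boundary circle. This uses that $M$ is Freudenthal, together with the finiteness of $\mathcal{U}(Q)$ from \cite{raymond1960end}.
\item Note that each complementary component $U$ of $K_n$ determines a clopen subset $\Ends{U} \subset \Ends{M}$, and that $U$ has positive genus exactly when $\Ends{U}$ meets $\mathcal{E}'(M)$ or $U$ contains a handle.
\item Take two surfaces $M, N$ with equal genus and a homeomorphism of pairs $\phi : (\Ends{M},\mathcal{E}'(M)) \to (\Ends{N},\mathcal{E}'(N))$. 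Run a back-and-forth induction that builds homeomorphisms between the stages of exhaustions of $M$ and $N$. At each stage the boundary circles should be matched compatibly with a clopen partition of the ends respected by $\phi$, and the genus of each piece should agree.
\end{enumerate}
The induction step uses only the classification of compact orientable surfaces with boundary, by genus and number of boundary components. The union of the stage homeomorphisms is then the desired homeomorphism $M \to N$.

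The main obstacle is the matching step. Given a compact piece $K_n \subset M$ with its end partition, we need a compact piece of $N$ with the same number of boundary circles, the same genus, and complementary components whose end sets are the $\phi$-images of those of $K_n$. When the genus is infinite we must also absorb the leftover handles. My first move there is to refine the partition of $\Ends{N}$ by clopen sets, chosen as images under $\phi$. Then I would slide handles into a complementary component whose end set meets $\mathcal{E}'(N)$. Such a component has infinite genus, so it can spare finitely many handles. Once this bookkeeping is set up, the rest of the argument is routine.
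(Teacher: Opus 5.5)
Your proposal has a genuine gap. The paper gives no proof of its own; it simply cites Ker\'{e}kj\'{a}rt\'{o}. So the only step specific to this statement is your reduction to Richards' theorem, and that is where the gap lies.

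The last datum in the statement is not your $\mathcal{E}'(M)$. It is the set of genus-accumulating ends \emph{inside} $\EndsInf{M}$, namely $\mathcal{E}'(M)\cap\EndsInf{M}$. Your claim that $\EndsInf{M}$ ``can only refine the classification'' therefore assumes the given data recover the pair $(\Ends{M},\mathcal{E}'(M))$. That requires $\mathcal{E}'(M)\subseteq\EndsInf{M}$, which fails for general complete Riemannian surfaces, and then the statement as written is false. For example, give Jacob's ladder a complete metric of finite area. Build it as a bi-infinite chain of genus-one pieces with two boundary circles, the $n$-th of area at most $2^{-|n|}$ but with its boundary circles at distance at least $1$; bounded sets then meet finitely many pieces, so the metric is complete. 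Do the same for the once-punctured Loch Ness monster, with a cusp at the puncture. Both surfaces have infinite genus, two ends and $\EndsInf{M}=\emptyset$, so all four data in the statement agree. Yet they are not homeomorphic: Jacob's ladder has two ends accumulating genus, the other surface only one.

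Richards' theorem yields the statement only after a correction, which you should make explicit. One option is to read the last datum as the full closed set $\mathcal{E}'(M)\subseteq\Ends{M}$. Then $\EndsInf{M}$ really is redundant and your citation is exactly what is needed. The other option is to add a hypothesis ensuring finite-volume ends carry no genus. This holds for complete hyperbolic surfaces, whose finite-area ends are cusps by the thick--thin decomposition, so it covers the hyperbolic case of Theorem~\ref{intro:space of ends surfaces}. For general Riemannian surfaces you also need to know which finite-volume ends accumulate genus.

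Apart from this, your outline of Richards' back-and-forth argument is a fair sketch of the classical proof. It has the right ingredients: an exhaustion whose complementary components each have one boundary circle, matching of clopen end partitions, and parking surplus handles in a component whose ends meet $\mathcal{E}'$.
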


\begin{proof}[Proof of Theorem \ref{intro:space of ends surfaces}]
Let $\nu$ be a stationary random surface. We will apply our \enquote{no geometric core} result (Theorem \ref{thm:main markov}) repeatedly with respect to the various geometric cores constructed in \S\ref{sec:cores} in order to analyze the homeomorphism type of a $\nu$-generic surface.

First, by invoking the two geometric cores $A_r^\mathrm{isolated}$ and $A_{r,v}^\mathrm{finite}$ (or rather, at this point, making use of Corollary \ref{intro:space of ends many examples} directly) we see that $\nu$-almost every surface has either $0,1,2$  or a Cantor space of infinite-volume ends, and if there are finite-volume ends then they are dense in the space of all ends. Similarly, by making use of the geometric core $A_r^\mathrm{genus}$ (in a manner analogous to the proof of Theorem \ref{intro:space of ends general}) we see that if the genus of the surface is positive, then all infinite-volume ends have infinite genus. This establishes the full classification as given in the table. We refer the reader to the paper \cite{biringer2017ends} for more information about obtaining this table.

Lastly, if the surfaces are hyperbolic, then the surface homeomorphism type having zero genus and two infinite-volume ends corresponds to the \enquote{hyperbolic cylinder}, namely a quotient of the hyperbolic plane by the action of a single loxodromic element. This case can be ruled out by invoking Theorem \ref{thm:main markov} with respect to the geometric core $A_{r,R}^\mathrm{injrad}$ detecting injectivity radius (once again, analogously to the proof of Theorem  \ref{intro:space of ends general}).
\end{proof}

\appendix

\section{Stationary measures and expected return times}
\label{appendix}

In this work, we make essential use of Theorems \ref{thm:MT inequality} and \ref{thm:MT infinity} on expected return times. Their statements and proofs can be found in the rich and insightful textbook of Meyn and Tweedie \cite{meyn2012markov}.

The textbook \cite{meyn2012markov} covers the general case of Markov kernels on abstract measure space, as is needed in our setting. However, it uses a very specialized language and notations, and it is not easy to extract from it the proof of a single result without reading it carefully and paying special attention to these notations. 

For this reason, and for the convenience of the reader, we present here the complete and \enquote{stand-alone} proofs of Theorems \ref{thm:MT inequality} and \ref{thm:MT infinity}. These are essentially the proofs from \cite{meyn2012markov} adapted to our language.

\begin{proof}[Proof of Theorem \ref{thm:MT inequality}]
Here $K$ is a Markov kernel and $\nu$ is a probability measure stationary with respect to $K$. Further $A \subset \MM$ is a $\nu$-measurable subset with $\nu(A) > 0$. We consider the associated measure $\nu_A$ given in Definition \ref{def:nu_A}.

Let $E \subset \MM$ be any $\nu$-measurable subset. For each $n$ denote
$$ P_A^n (x,E) = 
\mathbb{P}_{K}^x\left(\{\text{$\tau_A \ge n$ and $p_n \in E$}\}\right).$$
We shall prove by induction  that
$$\nu(E)=\int_{A} \sum_{m=1}^n P_A^m (x,E)\;\mathrm{d}\nu(x)+\int_{\MM\backslash A} P_{A}^n(x,E)\;\mathrm{d}\nu(x)$$
holds true for all $n$.
The fact that $K*\nu = \nu$ implies that
$$\nu(E)=\int_{\MM} K_x(E) \; \mathrm{d}\nu(x)$$
which is equivalent to the base case of the induction (i.e. $n = 1$). To get the induction step, we first use the definition of $P^n_A$ and the stationarity of $\nu$ to write
\begin{align*}
\int_{\MM\backslash A} P_{A}^n(x,E)\;\mathrm{d}\nu(x) &= \int_{\MM} \int_{\MM} 1_{\MM\backslash A}(y)\cdot P_{A}^n(y,E)\;\mathrm{d}K_x(y)\;\mathrm{d}\nu(x)\\
&=\int_{\MM} P_{A}^{n+1}(x,E)\;\mathrm{d}\nu(x).
\end{align*}
It follows from the induction hypothesis and from the above computation that 
\begin{align*}
\nu(E)&=\int_{A} \sum_{m=1}^n P_A^m (x,E)\;\mathrm{d}\nu(x)+\int_{\MM\backslash A} P_{A}^n(x,E)\;\mathrm{d}\nu(x)
\\&=\int_{A} \sum_{m=1}^n P_A^m (x,E)\;\mathrm{d}\nu(x)+\int_{\MM} P_{A}^{n+1}(x,E)\;\mathrm{d}\nu(x)
\\&=\int_{A} \sum_{m=1}^{n+1}P_A^m (x,E)\;\mathrm{d}\nu(x)+\int_{\MM\backslash A} P_{A}^{n+1}(x,E)\;\mathrm{d}\nu(x).
\end{align*}
This establishes the induction step for $n+1$. To conclude the proof, we drop the second term and get
$$\nu(E)\geq \sup \int_{A} \sum_{m=1}^{n} P_A^m (x,E) \;\mathrm{d}\nu(x) = \nu_A (E)$$
as required.
\end{proof}

\begin{proof}[Proof of Theorem \ref{thm:MT infinity}]
Let $A$ be an $\alpha$-measurable subset as in the statement of the theorem. Assume by contradiction that
$$\alpha_A(\MM)=\int_A \mathbb{E}_{x,K}(\tau_A)\;\mathrm{d}\alpha(x)<\infty.$$
It follows that 
$$ \alpha(\{x\in A \: :  \: \mathbb{P}_{K}^x(\{\tau_A=\infty\})>0\}) \le \alpha(\{x\in A \: :\:\mathbb{E}_{x,K}(\tau_A)=\infty\})=0.$$
Recall that $\alpha_A \le \alpha$ holds true by Theorem \ref{thm:MT inequality}. The assumption towards contradiction implies that, in fact,   $\alpha_A = \alpha$ when restricted to the set $A$. Indeed, for any $\alpha$-measurable subset $E\subseteq A$ we get
\begin{align*}
\alpha (A)&=\alpha(E)+\alpha(A\backslash E)\geq\alpha_A (E)+\alpha_A (A\backslash E)\\
&=\int_A \mathbb{E}_{x,K}\left(\sum_{j=1}^{\tau_A }1_E (\typ{p_j} )\right)\;\mathrm{d}\alpha(x)+
\int_A \mathbb{E}_{x,K}\left(\sum_{j=1}^{\tau_A }1_{A \backslash E} (\typ{p_j} )\right)\;\mathrm{d}\alpha(x)\\
&=\int_A \mathbb{E}_{x,K}\left(\sum_{j=1}^{\tau_A }1_A (\typ{p_j} )\right)\;\mathrm{d}\alpha(x)
=\int_A \mathbb{P}_{K}^x(\{\tau_A < \infty\}) \; \mathrm{d}\alpha(x)\\
&\geq \alpha(\{x\in A\::\:\mathbb{P}_{K}^x(\{\tau_A=\infty\})=0\})=\alpha(A).
\end{align*}
Hence equality must hold throughout, so that  $\alpha(E)=\alpha_A (E)$. 

Out next task is to prove that $\alpha_A$ is stationary with respect  to $K$.
As in the proof of Theorem \ref{thm:MT inequality}, we denote for each $n$
$$ P_A^n (x,E) = 
\mathbb{P}_{K}^x\left(\{\text{$\tau_A \ge n$ and $p_n \in E$}\}\right).$$
To see that $\nu_A$ is stationary with respect to $K$, consider a measurable subset  $E\subseteq \MM$. Then
    \begin{align*}
    \alpha_A (E)&=\int_A \mathbb{E}_{x,K}\left(\sum_{j=1}^{\tau_A }1_E (\typ{p_j})\right)\;\mathrm{d}\alpha(x)=\int_A \sum_{n=1}^\infty P_A^n (x,E)\;\mathrm{d}\alpha(x)\\
    &= \int_A P_A^1 (x,E) + \sum_{n=2}^\infty P_A^n (x,E)\;\mathrm{d}\alpha(x)\\
    &=\int_A K_x(E)  \;\mathrm{d}\alpha(x) + \int_A \int_{\MM\backslash A}\sum_{n=1}^\infty P_A^n(y,E) \;\mathrm{d}K_x(y)\;\mathrm{d}\alpha(x)\\
    &=\int_A K_x(E)\;\mathrm{d}\alpha_A (x)+\int_{\MM\backslash A}K_x(E)\;\mathrm{d}\alpha_A (x) = (K*\alpha_A)(E).
    \end{align*}
 The equality on the bottom line is by the fact proved above that $\alpha_A =\alpha$ on the subset $A$ for the first integral, and by the fact that $\alpha$ is stationary with respect to $K$ and the definition of $\alpha_A$ for the second integral. We conclude that $\alpha_A$ is stationary with respect to $K$.
    
Consider the difference measure $\alpha_A' =\alpha-\alpha_A$ on the space $\MM$. Note that $\alpha'_A$ is well-defined; while the measure $\alpha$ is infinite, the measure $\alpha_A$ is finite (by the assumption towards contradiction). The measure  $\alpha'_A$ is non-negative (by Theorem \ref{thm:MT inequality}), and $\alpha'_A$ is stationary with respect to $K$ as a difference of two such stationary  measures. Note that 
    $$\alpha'_A (\MM)=\alpha(\MM)-\alpha_A (\MM)=\infty.$$
In particular $\alpha'_A$ is non-zero. 

On the other hand, we have $\alpha'_A (A)=0$ by the above discussion. By the fact that the kernel $K$ is  intrinsically irreducible, we obtain $\alpha'_A(\Fatten{A}) = 0$ from  Proposition \ref{prop:O retains 0}. We arrive at a contradiction to the assumption that $\alpha(\MM \setminus \Fatten{A})=0$.
\end{proof}

\bibliographystyle{alpha}
\bibliography{stationary}

\begin{thebibliography}{Bow15}

\bibitem[AB22]{abert2022unimodular}
Mikl{\'o}s Ab{\'e}rt and Ian Biringer.
\newblock Unimodular measures on the space of all {R}iemannian manifolds.
\newblock {\em Geometry \& Topology}, 26(5):2295--2404, 2022.

\bibitem[AC25]{axon2025end}
Liam Axon and Jack Calcut.
\newblock The end sum of surfaces.
\newblock {\em Contemporary Mathematics}, 812, 2025.

\bibitem[AL07]{aldous2007processes}
David Aldous and Russell Lyons.
\newblock Processes on unimodular random networks.
\newblock {\em Electronic Communications in Probability [electronic only]},
  12:1454--1508, 2007.

\bibitem[BC12]{benjamini2012ergodic}
Itai Benjamini and Nicolas Curien.
\newblock Ergodic theory on stationary random graphs.
\newblock {\em Electron. J. Probab}, 17(93):1--20, 2012.

\bibitem[BC25]{bass2025ends}
William Bass and Jack Calcut.
\newblock Ends and end cohomology.
\newblock {\em Expositiones Mathematicae}, page 125692, 2025.

\bibitem[BH13]{bridson2013metric}
Martin Bridson and Andr{\'e} Haefliger.
\newblock {\em Metric spaces of non-positive curvature}, volume 319.
\newblock Springer Science \& Business Media, 2013.

\bibitem[Bow15]{Bowen15}
Lewis Bowen.
\newblock Cheeger constants and {$L^2$}-betti numbers.
\newblock {\em Duke Mathematical Journal}, 164(3), February 2015.

\bibitem[BR17]{biringer2017ends}
Ian Biringer and Jean Raimbault.
\newblock Ends of unimodular random manifolds.
\newblock {\em Proceedings of the American Mathematical Society},
  145(9):4021--4029, 2017.

\bibitem[Cur17]{curien2017random}
Nicolas Curien.
\newblock Random graphs: the local convergence point of view.
\newblock {\em Unpublished lecture notes. Available at https://www. math.
  u-psud. fr/\~{} curien/cours/cours-RG-V3. pdf}, 2017.

\bibitem[Fre31]{freudenthal1931enden}
Hans Freudenthal.
\newblock {\"U}ber die enden topologischer r{\"a}ume und gruppen.
\newblock {\em Mathematische Zeitschrift}, 33(1):692--713, 1931.

\bibitem[Ghy95]{ghys1995topologie}
{\'E}tienne Ghys.
\newblock Topologie des feuilles g{\'e}n{\'e}riques.
\newblock {\em Annals of Mathematics}, 141(2):387--422, 1995.

\bibitem[GL23]{gekhtman2023stationary}
Ilya Gekhtman and Arie Levit.
\newblock Stationary random subgroups in negative curvature.
\newblock {\em arXiv preprint arXiv:2303.04237}, 2023.

\bibitem[Gui21]{guilbault2021endsshapesboundariesmanifold}
Craig Guilbault.
\newblock Ends, shapes, and boundaries in manifold topology and geometric group
  theory, 2021.

\bibitem[HK26]{HK}
Yair Hartman and Nadav Kalma.
\newblock Cores in stationary actions and ends of stationary random subgroups.
\newblock {\em preprint}, 2026.

\bibitem[HR96]{hughes1996ends}
Bruce Hughes and Andrew Ranicki.
\newblock {\em Ends of complexes}.
\newblock Number 123. Cambridge university press, 1996.

\bibitem[Kec12]{kechris2012classical}
Alexander Kechris.
\newblock {\em Classical descriptive set theory}.
\newblock Springer Science \& Business Media, 2012.

\bibitem[Ker23]{kerekjarto1923vorlesungen}
B~v Ker{\'e}kj{\'a}rt{\'o}.
\newblock Vorlesungen {\"u}ber topologie: I, fl{\"a}chentopologie.
\newblock 1923.

\bibitem[Khe23]{khezeli2023unimodular}
Ali Khezeli.
\newblock Unimodular random measured metric spaces and palm theory on them.
\newblock {\em arXiv preprint arXiv:2304.02863}, 2023.

\bibitem[MT12]{meyn2012markov}
Sean Meyn and Richard Tweedie.
\newblock {\em Markov chains and stochastic stability}.
\newblock Springer Science \& Business Media, 2012.

\bibitem[Pes90]{peschke1990theory}
Georg Peschke.
\newblock The theory of ends.
\newblock {\em Nieuw Archief voor Wiskunde}, 8:1--12, 1990.

\bibitem[Ray60]{raymond1960end}
Frank Raymond.
\newblock The end point compactification of manifolds.
\newblock {\em Pacific Journal of Mathematics}, 10(3):947--963, 1960.

\bibitem[Sie65]{siebenmann1965obstruction}
Laurence Siebenmann.
\newblock {\em The obstruction to finding a boundary for an open manifold of
  dimension greater than five.}
\newblock PhD thesis, Princeton., 1965.

\bibitem[Sta68]{stallings1968torsion}
John Stallings.
\newblock On torsion-free groups with infinitely many ends.
\newblock {\em Annals of Mathematics}, 88(2):312--334, 1968.

\end{thebibliography}

\end{document}